\titlespacing*{\section}{0pt}{0.5\baselineskip}{0.5\baselineskip}
\titlespacing*{\subsection}{0pt}{0.2\baselineskip}{0.2\baselineskip}
\setlist[enumerate]{nosep}
\definecolor{labelkey}{rgb}{0,0.08,0.45}
\definecolor{refkey}{rgb}{0,0.6,0.0}
\definecolor{Brown}{rgb}{0.45,0.0,0.05}
\definecolor{lime}{rgb}{0.00,0.8,0.0}
\definecolor{lblue}{rgb}{0.5,0.5,0.99}
\definecolor{dgreen}{rgb}{0.00,0.49,0.00}
\definecolor{dblue}{rgb}{0,0.08,0.55}
\colorlet{minhblue}{dblue}
\colorlet{mygreen}{dgreen}
\newcommand{\seppfive}{\setlength{\itemsep}{-5pt}}
\newcommand{\thmtit}[1]{{\bf{#1}}}
\providecommand{\siff}{\Leftrightarrow}
\newcommand{\weakly}{\ensuremath{\:{\rightharpoonup}\:}}
\newcommand{\nnn}{\ensuremath{{n\in{\mathbb N}}}}
\newcommand{\knn}{\ensuremath{{n\in{\mathbb K}}}}
\newcommand{\menge}[2]{\big\{{#1}~\big |~{#2}\big\}}
\newcommand{\fenv}[1]%
{\ensuremath{\,\overrightarrow{\operatorname{env}}_{#1}}}
\newcommand{\benv}[1]%
{\ensuremath{\,\overleftarrow{\operatorname{env}}_{#1}}}
\newcommand{\RR}{\ensuremath{\mathbb R}}
\newcommand{\dom}{\ensuremath{\operatorname{dom}}}
\newcommand{\argmin}{\ensuremath{\operatorname{argmin}}}
\newcommand{\prox}{\ensuremath{\operatorname{Prox}}}
\newcommand{\sri}{\ensuremath{\operatorname{sri}}}
\newcommand{\ran}{\ensuremath{\operatorname{ran}}}
\newcommand{\zer}{\ensuremath{\operatorname{zer}}}
\newcommand{\Id}{\ensuremath{\operatorname{Id}}}
\newcommand{\TDR}{T_{\text{\tiny \rm DR}}}
\newcommand{\TPR}{T_{\text{\tiny \rm PR}}}
\newcommand{\bx}{\ensuremath{\mathbf{x}}}
\newcommand{\by}{\ensuremath{\mathbf{y}}}
\DeclareMathOperator*{\minimize}{minimize\ }
\newcommand{\veet}{\ensuremath{{\scriptscriptstyle\vee}}} 
\crefname{enumi}{}{}
\crefname{appsec}{Appendix}{Appendices}
\crefname{equation}{}{equations}
\crefname{chapter}{Appendix}{chapters}
\crefname{item}{}{items}
\newtheorem{theorem}{Theorem}[section]
\newtheorem{lemma}[theorem]{Lemma}
\newtheorem{lem}[theorem]{Lemma}
\newtheorem{prop}[theorem]{Proposition}
\newtheorem{thm}[theorem]{Theorem}
\newtheorem{example}[theorem]{Example}
\newtheorem{fact}[theorem]{Fact}
\newtheorem{remark}[theorem]{Remark}
\providecommand{\norm}[1]{\lVert#1\rVert}
\providecommand{\normsq}[1]{\lVert#1\rVert^2}
\providecommand{\innp}[1]{\langle#1\rangle}
\providecommand{\grad}{\nabla}
\providecommand{\RR}{\mathbb{R}}
\providecommand{\ran}{\operatorname{ran}}
\providecommand{\intr}{\operatorname{int}}
\providecommand{\dom}{\operatorname{dom}}
\newcommand{\fix}{\ensuremath{\operatorname{Fix}}}
\providecommand{\Id}{\operatorname{{ Id}}}
\providecommand{\fady}{\varnothing}
\providecommand{\argmin}{\mathrm{arg}\!\min}
\providecommand{\fix}{\operatorname{Fix}}
\providecommand{\ran}{\operatorname{ran}}
\providecommand{\Id}{\operatorname{Id}}
\providecommand{\pt}{{\partial}}
\providecommand{\zer}{\operatorname{zer}}
\providecommand{\fady}{\varnothing}
\providecommand{\ri}{\operatorname{ri}}
\providecommand{\RR}{\mathbb{R}}
  \newcommand*\mybluebox[1]{%
    \colorbox{RoyalBlue!20}{\hspace{1em}#1\hspace{1em}}}
\begin{document}

%

%

\title{\textsc
A note on the equivalence of operator splitting methods}

\author{Walaa M.\ Moursi\thanks{Stanford 
University, Packard Building, Room 222,
 350 Serra Mall, Stanford, CA 94305, USA
and 
Mansoura University, Faculty of Science, Mathematics Department, 
Mansoura 35516, Egypt. E-mail:
\texttt{wmoursi@stanford.edu}.}
~and Yuriy Zinchenko\thanks{University of Calgary,
Department of Mathematics and Statistics
Math Science Building , 
2500 University Drive NW
Calgary, Alberta, T2N 1N4, Canada.
E-mail: \texttt{yzinchenko@ucalgary.ca}.}}

\date{June 8, 2018}

\maketitle

\begin{abstract}
\noindent
This paper provides a comprehensive 
discussion of the equivalences 
between splitting methods.
These equivalences have been studied 
over the past few decades and, in fact, 
have proven to be very useful.
In this paper, we survey known results 
and also present new ones.
In particular, we provide simplified proofs 
of the equivalence of
the ADMM and the Douglas--Rachford method
and the equivalence of
the ADMM with intermediate update of multipliers and 
the Peaceman--Rachford method.
Other splitting methods are also considered.

%
\end{abstract}
{\small
\noindent
{\bfseries 2010 Mathematics Subject Classification:}
{Primary 
47H05, 
47H09, 
49M27; 
Secondary 
49M29, 
49N15, 
90C25. 
}

\noindent {\bfseries Keywords:}
Alternating Direction Method of Multipliers (ADMM),
Chambolle--Pock method,
Douglas--Rachford algorithm,
Dykstra method,
Equivalence of splitting methods,
Fenchel--Rockafellar Duality,
Method of Alternating Projections (MAP),
Peaceman--Rachford algorithm.
}

\section{Introduction}

%


Splitting methods 
have become popular in solving convex optimization
problems that involve finding a minimizer of the sum of two 
proper lower semicontinuous convex functions. 
Among these methods are the Douglas--Rachford 
and the Peaceman--Rachford methods introduced 
in the seminal work of Lions and Mercier
\cite{L-M79},  
the forward-backward method
(see, e.g., \cite{Comb04} and \cite{Tseng91}),
Dykstra's method (see, e.g., \cite{BC08}
and \cite{Boyle-Dykstra86}),
and 
the Method of Alternating
Projections (MAP) (see, e.g., \cite{Deutsch01}).

When the optimization problem features 
the composition of one of the functions with
a bounded linear operator,
a popular technique
is the Alternating-Direction Method of Multipliers (ADMM)
(see \cite[Section~4]{Gabay83},
\cite[Section~10.6.4]{CP11}
and also \cite[Chapter~15]{Beck17}).
The method has a wide range of 
applications including large-scale optimization,
machine learning,
image processing and portfolio optimization,
 see, e.g., 
 \cite{Boyd-PCPE11},
 \cite{CP08}
  and 
 \cite{EckThesis}.
A powerful framework to use
ADMM in the more general setting of 
\emph{monotone 
operators}
is developed in the work of 
Brice{\~n}o-Arias
 and 
Combettes 
\cite{B-AC} (see also \cite{Bot17} and \cite{CP}).
Another relatively recent method 
is the Chambolle--Pock 
method introduced in \cite{CP2010}.

Equivalences between splitting methods 
have been studied over the past four decades.
For instance, 
it is known that ADMM is equivalent to 
the Douglas--Rachford method 
\cite{L-M79} (see, also \cite{EckBer})
in the sense that 
with a careful choice of the starting point,
one can prove that the sequences generated 
by both algorithms coincide. (See, e.g.,
\cite[Section~5.1]{Gabay83} and \cite[Remark~3.14]{BK12}.)
A similar equivalence holds between ADMM
(with intermediate update of multiplier)
and Peaceman--Rachford method \cite{L-M79}
 (see \cite[Section~5.2]{Gabay83}). 
In \cite{OV17}, the authors  proved the 
correspondence of Douglas--Rachford
 and Chambolle--Pock methods.

\emph{In this paper,
we review the equivalences between different splitting methods.
Our goal is to present a comprehensive
and self-contained study of these equivalences.
We also present counterexamples that
show failure of some equivalences.}


The rest of this paper is organized as follows:
Section~\ref{sec:2} provides a brief literature review
of ADMM, Douglas--Rachford 
and Peaceman--Rachford methods.
In Sections~\ref{sec:4} and \ref{sec:5}
we explicitly describe the equivalence of  
ADMM (respectively ADMM with intermediate 
update of multipliers) and Douglas--Rachford 
(respectively Peaceman--Rachford) method
introduced by Gabay in \cite[Sections~5.1\&5.2]{Gabay83}.
We provide simplified proofs of these equivalences.
Section~\ref{sec:CP} focuses on the recent
work of O'Connor and Vandenberghe 
concerning the equivalence of 
Douglas--Rachford and Chambolle--Pock 
(see \cite{OV17}). 
In Section~\ref{sec:6}, we provide 
counterexamples which show that,
in general, we cannot deduce 
the equivalence of ADMM to Dykstra's method or to MAP. 

Our notation is standard and follows largely,
e.g., \cite{BC2017}.

\section{Three techniques}
\label{sec:2}
In this paper, we assume that
\begin{empheq}[box=\mybluebox]{equation*}
\label{X:assmp}
X \text{~ and~} Y\text{~are real Hilbert spaces},
\end{empheq} 
 that
\begin{empheq}[box=\mybluebox]{equation*}
\label{fg:assmp}
\text{$
f\colon X\to \left]-\infty,+\infty\right]
$ and $
g\colon Y\to \left]-\infty,+\infty\right]$
are convex lower semicontinuous and proper.}
\end{empheq} 
\subsection*{Alternating-Direction Method of Multipliers (ADMM)}
In the following we assume that
\begin{empheq}[box=\mybluebox]{equation}
\label{eq:cq:L}
\text{ $L \colon Y\to X$ is linear
such that $L^*L$ is invertible},
\end{empheq} 
that
\begin{equation}
  \label{eq:cq:L:i}
\argmin(f\circ L+g) \neq \fady,
\end{equation}
and that
\begin{equation}
  \label{eq:cq:L:ii}
 0\in \sri(\dom f-L(\dom g)),
\end{equation}
where $\sri S$ denotes 
the \emph{strong relative interior} of a subset $S$ of $X$
 with respect to the closed affine hull of $S$.
When $X$ is finite-dimensional we have $\sri S=\ri S$, where $\ri S$
is the \emph{relative interior} of $S$ defined as 
the interior of $S$ with respect to the affine hull of $S$.

Consider the problem
 \begin{equation}
 \label{eq:prob}
\minimize_{y\in Y} f(Ly)+g(y).
 \end{equation}
Note that \cref{eq:cq:L:i}
 and
\cref{eq:cq:L:ii}
imply that (see, e.g., \cite[Proposition~27.5(iii)(a)1]{BC2017})
 \begin{equation}
 \label{eq:connec:2:inc}
\argmin(f\circ L+g)
=\zer( \pt(f\circ L)+\pt g)
=\zer(L^*\circ (\pt f) \circ L+\pt g)\neq \fady.
 \end{equation}
 
In view of \cref{eq:connec:2:inc},
solving \cref{eq:prob} is equivalent to solving the 
inclusion:  
\begin{equation}
\label{eq:inc:inc}
\text{Find $y\in Y$ such that
$0\in L^*(\pt f  (Ly))+\pt g(y)$.}
\end{equation}

%

%

The augmented Lagrangian   associated with
\cref{eq:prob} is the function
\begin{equation}
\mathcal{L}\colon X\times Y \times Y\to \left]-\infty,+\infty\right]
\colon (a,b,u)\mapsto f(a)+g(b)+\innp{u,\mathcal{L}a-b}
+\normsq{\mathcal{L}a-b}.
\end{equation}
The ADMM (see 
\cite[Section~4]{Gabay83}
 and also 
 \cite[Section~10.6.4]{CP11})
applied to solve \cref{eq:prob}
consists in minimizing $\mathcal{L}$ over $b$
then over $a$ and then applying a proximal
minimization  step with respect to the 
Lagrange multiplier $u$.  
The method applied 
with a starting point  $(a_0,u_0)\in X\times X$,
generates three sequences
$(a_n)_{\nnn}$;
$(b_n)_{n\ge 1}$ and $(u_n)_{\nnn}$
via $(\forall \nnn)$:
\begin{subequations}
\label{eq:ADMM}
\begin{align}
b_{n+1}\coloneqq&(L^*L+\pt g)^{-1}(L^* a_n-L^* u_n),
\label{eq:ADMM:a}\\
a_{n+1}\coloneqq &\prox_f(Lb_{n+1}+u_n),
\label{eq:ADMM:b}\\
u_{n+1}\coloneqq & u_n+Lb_{n+1}-a_{n+1},
\label{eq:ADMM:c}
\end{align}
\end{subequations}
where $\prox_f\colon X\to X\colon x\mapsto \argmin_{y\in X} 
\left(f(y)
+\tfrac{1}{2}\norm{x-y}^2\right)$.

\begin{fact}[\thmtit{convergence of ADMM}]{\rm(see \cite[Theorem~4.1]{Gabay83}.)}
Let $(a_0,u_0)\in X \times X$,
 and let $(a_n)_{\nnn}$,
$(b_n)_{n\ge 1}$ and $(u_n)_{\nnn}$
be defined as in 
\cref{eq:ADMM}.
Then there exists $\overline{a}\in X$ 
such that 
 $a_n\weakly\overline{a}\in \argmin(f \circ L+g)$.
\end{fact}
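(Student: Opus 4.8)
The plan is to deduce the convergence of ADMM from the classical weak convergence of the Douglas--Rachford (DR) iteration, by exhibiting \cref{eq:ADMM} as DR applied to the Fenchel--Rockafellar dual of \cref{eq:prob}. First I would record the dual. Writing \cref{eq:prob} in the split form ``$\minimize f(a)+g(b)$ subject to $a=Lb$'' and dualizing produces the dual objective $u\mapsto f^*(u)+g^*(-L^*u)$ on $X$, whose minimizers are the optimal multipliers. Put $A\coloneqq\partial f^*$ and $B\coloneqq\partial\big(g^*\circ(-L^*)\big)$; both are maximally monotone, being subdifferentials of proper lower semicontinuous convex functions. Before anything else I would verify $\zer(A+B)\neq\emptyset$: by \cref{eq:cq:L:i,eq:cq:L:ii} and \cref{eq:connec:2:inc} there is a primal minimizer $\bar b$ with $0\in L^*(\partial f(L\bar b))+\partial g(\bar b)$, hence some $\bar u\in\partial f(L\bar b)$ with $-L^*\bar u\in\partial g(\bar b)$; setting $\bar a\coloneqq L\bar b$ and using only the elementary inclusion $-L\,\partial g^*(-L^*\bar u)\subseteq B\bar u$ gives $\bar a\in A\bar u$ and $-\bar a\in B\bar u$, so $0\in A\bar u+B\bar u$.

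The core of the proof is a change of variables identifying the two recursions. I would set $z_n\coloneqq u_{n-1}+Lb_n$ for $n\ge1$ and show, with the Moreau decomposition $\prox_f+\prox_{f^*}=\Id$, that \cref{eq:ADMM} is equivalent to
\begin{equation*}
a_n=\prox_f(z_n),\qquad u_n=\prox_{f^*}(z_n)=J_Az_n,\qquad z_{n+1}=\TDR z_n,
\end{equation*}
where $\TDR\coloneqq\Id-J_A+J_B\circ(2J_A-\Id)$ with $J_A=\prox_{f^*}$ and $J_B=\prox_{g^*\circ(-L^*)}$. The multiplier update \cref{eq:ADMM:c}, combined with the Moreau identity, gives $u_n=\prox_{f^*}(z_n)$ at once. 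The crucial computation is the evaluation of $J_B$ at the reflected point $2J_Az_n-z_n=u_n-a_n$: a direct optimality argument shows that computing $\prox_{g^*\circ(-L^*)}$ reduces to applying $(L^*L+\partial g)^{-1}$ exactly as in \cref{eq:ADMM:a} --- this is precisely where the invertibility of $L^*L$ assumed in \cref{eq:cq:L} is used --- and yields $J_B(u_n-a_n)=(u_n-a_n)+Lb_{n+1}$, whence $\TDR z_n=u_n+Lb_{n+1}=z_{n+1}$.

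With the equivalence established, convergence follows from the theory of averaged operators. Since $\TDR$ is firmly nonexpansive and $\Fix\TDR\neq\emptyset$ (equivalently $\zer(A+B)\neq\emptyset$, shown above), the governing sequence converges weakly, $z_n\weakly z\in\Fix\TDR$, and the demiclosedness argument underlying the DR shadow sequence gives $u_n=J_Az_n\weakly\bar u\coloneqq J_Az\in\zer(A+B)$. Therefore $a_n=z_n-u_n\weakly z-\bar u$; taking $\bar a\coloneqq z-\bar u$, this is a weak limit of the difference of two weakly convergent sequences, which neatly avoids the fact that $\prox_f$ is \emph{not} weakly sequentially continuous. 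Finally, unwinding the fixed-point identity $J_B(2J_Az-z)=J_Az$ and appealing to \cref{eq:cq:L:ii} recovers $\bar b\in\argmin(f\circ L+g)$ with $\bar a=L\bar b$, which is the asserted limit.

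The step I expect to be most delicate is the operator identification in the middle paragraph: pinning down $J_B$ as the linear solve $(L^*L+\partial g)^{-1}$ via the invertibility of $L^*L$, and keeping the index bookkeeping between $(a_n,b_n,u_n)$ and $z_n$ consistent. A secondary subtlety is the primal recovery, where one must rely on the demiclosedness principle behind the DR shadow convergence rather than on any (false) weak continuity of the resolvents, and use the qualification \cref{eq:cq:L:ii} to pass from $\zer(A+B)$ back to $\argmin(f\circ L+g)$.
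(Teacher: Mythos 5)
The paper does not actually prove this Fact---it is quoted from \cite[Theorem~4.1]{Gabay83}---but your argument is correct and follows essentially the route the paper itself develops: you recast \cref{eq:ADMM} as the Douglas--Rachford iteration for the dual pair $(f^*,g^*\circ(-L^*))$ with governing sequence $z_n=Lb_n+u_{n-1}$, which is exactly \cref{Prop:ADMM:DR}\cref{prop:DR:to:ADMM}; your identification of $J_B$ as the linear solve is \cref{prop:JC_form}\ref{prop:JC_form:ii}, your one-step computation $\TDR z_n=z_{n+1}$ is \cref{lem:1:step:DR:ADMM}, and the convergence then comes from \cref{fact:DRA:conv} plus Moreau's decomposition to pass from the shadow $u_n=\prox_{f^*}z_n$ to $a_n=z_n-u_n$. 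The only imprecision is in the primal-recovery step: the exact chain rule $\partial\bigl(g^*\circ(-L^*)\bigr)=(-L)\circ\partial g^*\circ(-L^*)$ needed to write $\bar a=L\bar b$ with $\bar b\in\argmin(f\circ L+g)$ is justified not by \cref{eq:cq:L:ii} but by $\ran L^*=X$, which follows from the standing assumption that $L^*L$ is invertible and is precisely the qualification invoked in the proof of \cref{prop:JC_form}.
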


\subsection*{The Douglas--Rachford method}
Suppose that $Y=X$ and that $L=\Id$.
In this case Problem \cref{eq:prob} becomes

 \begin{equation}
 \label{eq:prob:noL}
 \minimize_{x\in X} f(x)+g(x).
 \end{equation}

The Douglas--Rachford (DR) method, 
introduced in \cite{L-M79},
 applied to the ordered pair $(f,g)$ 
with a starting point $x_0\in X$
to solve
\cref{eq:prob:noL} generates two sequences 
$(x_n)_{\nnn}$ and $(y_n)_{\nnn}$
via:
\begin{subequations}
\label{eq:DR:all}
\begin{align}
y_{n}\coloneqq &\prox_f x_n,
\label{eq:DR:b}
\\
x_{n+1}\coloneqq &\TDR x_n,
\label{eq:DR:a}
\end{align}
\end{subequations}
where 
\begin{equation}
\label{def:TDR}
\TDR\coloneqq \TDR(f,g)
=\tfrac{1}{2}(\Id+R_gR_f)
=\Id-\prox_f+\prox_g(2\prox_f-\Id),
\end{equation}
and where 
$R_f\coloneqq 2\prox_f-\Id$.

Let $T\colon X\to X$.
Recall that the set of 
fixed points of $T$, denoted by
$\fix T$, is defined as $\fix T\coloneqq\menge{x\in X}{x=Tx}$.
\begin{fact}[\thmtit{convergence of Douglas--Rachford method}]{\rm 
(see, e.g., \cite[Theorem~1]{L-M79}
or \cite[Corollary~28.3]{BC2017}.)}
\label{fact:DRA:conv}
 Let $x_0\in X$
 and let $(x_n)_\nnn$
  and $(y_n)_\nnn$
   be defined as in \cref{eq:DR:all}.
Then there exists 
 $\overline{x}\in \fix \TDR$ 
such that $  x_n\weakly\overline{x}$
 and $y_n\weakly\prox_f\overline{x}\in \argmin(f+g)$.
\end{fact}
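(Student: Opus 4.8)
The plan is to recast the iteration in the language of maximally monotone operators and then invoke the standard convergence theory for averaged fixed-point iterations. Write $A \coloneqq \pt f$ and $B \coloneqq \pt g$, so that $\prox_f = (\Id + A)^{-1}$ and $\prox_g = (\Id + B)^{-1}$ are the resolvents of the maximally monotone operators $A$ and $B$, and $R_f = 2\prox_f - \Id$, $R_g = 2\prox_g - \Id$ are the associated reflected resolvents. First I would record that each reflected resolvent is nonexpansive, hence so is the composition $R_g R_f$; consequently $\TDR = \tfrac12(\Id + R_g R_f)$ is firmly nonexpansive (equivalently, $\tfrac12$-averaged). Next I would verify the fixed-point dictionary $\prox_f(\fix \TDR) = \zer(A + B) = \argmin(f+g)$, so that under the standing assumption $\argmin(f+g) \neq \emp$ we have $\fix \TDR \neq \emp$; this is what makes the iteration well behaved.

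With $\TDR$ firmly nonexpansive and $\fix\TDR \neq \emp$, the weak convergence of $(x_n)_\nnn$ is the classical Krasnoselskii--Mann statement. Concretely, I would show that $(x_n)_\nnn$ is Fej\'er monotone with respect to $\fix\TDR$, hence bounded, and that it is asymptotically regular, i.e. $x_n - \TDR x_n = x_n - x_{n+1} \to 0$ strongly (a property of averaged maps with fixed points). The demiclosedness principle applied to the nonexpansive map $R_g R_f$ then forces every weak sequential cluster point of $(x_n)_\nnn$ to lie in $\fix\TDR$; combining this with Fej\'er monotonicity and Opial's lemma yields a single such cluster point, giving $x_n \weakly \overline{x} \in \fix\TDR$.

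It remains to identify the limit of the shadow sequence $(y_n)_\nnn = (\prox_f x_n)_\nnn$. Put $a_n \coloneqq \prox_f x_n = y_n$ and $b_n \coloneqq \prox_g(2a_n - x_n)$. The resolvent characterizations give $x_n - a_n \in A a_n$ and $(2a_n - x_n) - b_n \in B b_n$, while by construction $a_n - b_n = x_n - \TDR x_n \to 0$ strongly. Let $a$ be any weak cluster point of the bounded sequence $(a_n)_\nnn$, say $a_{n_k} \weakly a$; then $b_{n_k} \weakly a$ as well. Testing the monotonicity of $A$ and $B$ against the reference pair coming from $\overline{x}$ --- namely $(\overline{y}, \overline{x}-\overline{y}) \in \gra A$ and $(\overline{y}, \overline{y}-\overline{x}) \in \gra B$ with $\overline{y} \coloneqq \prox_f \overline{x}$ --- and using $a_n - b_n \to 0$ together with $(x_n - a_n) + ((2a_n - x_n) - b_n) = a_n - b_n \to 0$, I would verify the limsup inner-product condition that upgrades weak--weak convergence of graph pairs to genuine membership in the graphs. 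This identifies $a$ as a point of $\zer(A+B) = \argmin(f+g)$ and pins it down to $a = \overline{y}$; since every cluster point equals $\overline{y}$, the whole sequence satisfies $y_n \weakly \overline{y} = \prox_f\overline{x} \in \argmin(f+g)$.

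The main obstacle is precisely this last identification: because $\prox_f$ is merely nonexpansive and not weakly sequentially continuous, one cannot simply pass to the limit in $y_n = \prox_f x_n$ to obtain $\prox_f\overline{x}$. The work is therefore in extracting enough strong convergence --- here the asymptotic regularity $a_n - b_n \to 0$ --- to validate the weak--weak sequential closedness criterion for the maximally monotone operators $A$ and $B$, which is the delicate step that turns boundedness and weak cluster information into the exact weak limit of the shadows.
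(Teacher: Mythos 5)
The paper itself offers no proof of this statement: it is quoted as a known Fact with pointers to \cite{L-M79} and to \cite[Corollary~28.3]{BC2017}, so your attempt has to be measured against the proof in that reference (whose shadow-sequence part goes back to Svaiter). Your first two paragraphs reproduce that architecture faithfully: $\TDR$ is firmly nonexpansive as the average of $\Id$ with the nonexpansive composition $R_gR_f$, the dictionary $\prox_f(\fix\TDR)=\zer(\pt f+\pt g)\subseteq\argmin(f+g)$ gives $\fix\TDR\neq\emp$ under the standing hypotheses, and Fej\'{e}r monotonicity, asymptotic regularity and demiclosedness yield $x_n\weakly\overline{x}\in\fix\TDR$. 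You also correctly isolate where the real work lies, namely that one cannot pass to the limit in $y_n=\prox_f x_n$.

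However, the mechanism you propose for that last step is circular as stated. Put $a_n=\prox_f x_n$, $u_n=x_n-a_n\in\pt f(a_n)$, $b_n=\prox_g(2a_n-x_n)$, $v_n=2a_n-x_n-b_n\in\pt g(b_n)$, and let $(a_{n_k},u_{n_k})\weakly(a,u)$, so that $u=\overline{x}-a$. Testing monotonicity only against the reference pairs $(\overline{y},\overline{x}-\overline{y})\in\gra \pt f$ and $(\overline{y},\overline{y}-\overline{x})\in\gra\pt g$ and using $a_n-b_n\to0$, $u_n+v_n=a_n-b_n\to0$ does show that the two nonnegative gaps $\innp{a_n-\overline{y},u_n-(\overline{x}-\overline{y})}$ and $\innp{b_n-\overline{y},v_n+(\overline{x}-\overline{y})}$ both tend to $0$; but unwinding the first of these gives $\lim_k\innp{a_{n_k},u_{n_k}}=\innp{a,u}+\normsq{a-\overline{y}}$, so the limsup criterion $\limsup_k\innp{a_{n_k},u_{n_k}}\le\innp{a,u}$ needed to place $(a,u)$ in $\gra\pt f$ holds precisely when $a=\overline{y}$, which is what you are trying to prove; the two operators' limsup conditions cannot be verified separately. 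The repair is to test against \emph{arbitrary} pairs $(z,w)\in\gra\pt f$ and $(z',w')\in\gra\pt g$, add the two monotonicity inequalities, and note that the only term not controlled by weak convergence is the sum $\innp{a_{n_k},u_{n_k}}+\innp{b_{n_k},v_{n_k}}=\innp{a_{n_k}-b_{n_k},u_{n_k}}+\innp{b_{n_k},u_{n_k}+v_{n_k}}\to0=\innp{a,u}+\innp{a,-u}$; this verifies the limsup condition for the maximally monotone product operator $(p,q)\mapsto\pt f(p)\times\pt g(q)$ at $\big((a,a),(u,-u)\big)$ and delivers $u\in\pt f(a)$ and $-u\in\pt g(a)$ simultaneously. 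The identification then comes not from the limsup computation but from $a+u=\overline{x}$ together with $u\in\pt f(a)$, which force $a=\prox_f(a+u)=\prox_f\overline{x}$. With that substitution your argument closes and coincides with the proof of \cite[Theorem~26.11]{BC2017}.
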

\subsection*{The Peaceman--Rachford method}
Let $h\colon X\to \left]-\infty,+\infty\right]$ 
be proper and let
$\phi\colon\RR_+\to \left[0,+\infty\right]$
be an increasing function that
 vanishes only at $0$. We say that $h$
 is \emph{uniformly convex} (with modulus of convexity $\phi$) if 
 $(\forall (x,y)\in \dom f\times \dom f)$ $(\forall \alpha \in \left]0,1\right[)$
 we have 
 $f(\alpha x+(1-\alpha) y)+\alpha(1-\alpha)\phi(\norm{x-y})
 \le \alpha f(x)+(1-\alpha) f(y)$.

When $g$ is uniformly convex,
the Peaceman--Rachford (PR) method, introduced in \cite{L-M79},   
can be used to solve 
\cref{eq:prob:noL}.
In this case, given
$x_0 \in X$,  PR method generates the sequences 
$(x_n)_{\nnn}$ and $(y_n)_{\nnn}$
via:
\begin{subequations}
\label{eq:PR:all}
\begin{align}
y_{n}\coloneqq &\prox_f x_n,
\\
x_{n+1}\coloneqq& \TPR x_n,
\end{align}
\end{subequations}
where 
\begin{equation}
\label{eq:def:TPR}
\TPR=\TPR(f,g)=R_gR_f=(2\prox_g-\Id)(2\prox_f-\Id).
\end{equation}

\begin{fact}[\thmtit{convergence of Peaceman--Rachford method}]{\rm 
(see, e.g., \cite[Proposition~1]{L-M79}
or \cite[Proposition~28.8]{BC2017}.)}
\label{fact:PRA:conv}
Suppose  that $g$ is uniformly 
convex.
Let $\overline{y}$ be the unique minimizer of
$f+g$, let $x_0\in X$
 and let $(x_n)_\nnn$
  and $(y_n)_\nnn$
   be defined as in \cref{eq:PR:all}.
 Then 
 $(\exists  \overline{x}\in \fix \TPR)$
 such that 
 $x_n\weakly \overline{x}$
 and $y_n\to\prox_f\overline{x}=\overline{y}$.
\end{fact}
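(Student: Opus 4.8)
The plan is to recast the iteration in the language of maximally monotone operators and reflected resolvents, and then to extract strong convergence of a shadow sequence from the uniform convexity of $g$. Put $A=\pt f$ and $B=\pt g$, so that $\prox_f=J_A=(\Id+A)^{-1}$, $\prox_g=J_B$, and $R_f=2\prox_f-\Id$, $R_g=2\prox_g-\Id$ are the reflected resolvents, with $\TPR=R_gR_f$. First I would record the structural facts: $R_f$ and $R_g$ are nonexpansive (being reflected resolvents of maximally monotone operators), hence $\TPR$ is nonexpansive; moreover $\TDR=\tfrac12(\Id+\TPR)$ gives $\fix\TPR=\fix\TDR$. This set is nonempty because the minimizer $\overline y$ of $f+g$ satisfies $0\in\pt f(\overline y)+\pt g(\overline y)=A\overline y+B\overline y$, and a zero of $A+B$ corresponds to a fixed point of $\TPR$ via $\overline x\mapsto J_A\overline x$. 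Since $g$ is uniformly convex and $f$ is convex, $f+g$ is uniformly, hence strictly, convex, so $\overline y$ is the unique minimizer; fixing any $\overline x\in\fix\TPR$ we have $J_A\overline x=\overline y$.

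The heart of the argument is that uniform convexity of $g$ strengthens the nonexpansiveness of $R_g$. Combining the firm nonexpansiveness of $J_B$ with the uniform monotonicity of $B$ (which follows from the uniform convexity of $g$), one obtains for all $u,v$ an inequality of the form $\normsq{R_gu-R_gv}\le\normsq{u-v}-4\phi(\norm{J_Bu-J_Bv})$, where $\phi$ is the (rescaled) modulus. Applying this with $u=R_fx_n$, $v=R_f\overline x$, using that $R_f$ is nonexpansive and that the inner shadow $z_n=\prox_g(R_fx_n)$ satisfies $\prox_g(R_f\overline x)=\overline y$, I would derive the estimate $\normsq{x_{n+1}-\overline x}\le\normsq{x_n-\overline x}-4\phi(\norm{z_n-\overline y})$. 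In particular $(\norm{x_n-\overline x})_\nnn$ is nonincreasing, so $(x_n)_\nnn$ is bounded and Fej\'er monotone with respect to $\fix\TPR$, and telescoping gives $\sum_\nnn\phi(\norm{z_n-\overline y})<\infty$; since $\phi$ is increasing and vanishes only at $0$, this forces $z_n\to\overline y$ strongly.

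It remains to pass from the strong convergence of the inner shadow to the two stated conclusions. For the weak convergence of $(x_n)_\nnn$ I would invoke the standard principle that a Fej\'er monotone sequence converges weakly to a point of the target set once every weak sequential cluster point lies in that set. Given $x_{n_k}\weakly\tilde x$, I would pass to the limit in the inclusions $x_{n_k}-y_{n_k}\in Ay_{n_k}$ and $R_fx_{n_k}-z_{n_k}\in Bz_{n_k}$, using the strong convergence $z_{n_k}\to\overline y$ together with the weak--strong sequential closedness of the graphs of $A$ and $B$, to conclude that $\tilde x\in\fix\TPR$ with $J_A\tilde x=\overline y$; weak convergence $x_n\weakly\overline x$ then follows. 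For the strong convergence $y_n=\prox_f x_n\to\overline y$, I would exploit the residual identity $x_{n+1}=x_n-2(y_n-z_n)$ together with the modulus inequality $(f+g)(y)-(f+g)(\overline y)\ge\phi(\norm{y-\overline y})$ coming from uniform convexity, and the subgradient inequalities at $y_n$ and $z_n$, to show $(f+g)(y_n)\to\min(f+g)$ and hence $y_n\to\overline y$.

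The main obstacle is precisely this last transfer. Because $\TPR$ is only nonexpansive and not averaged, the residual $x_n-x_{n+1}$ need not vanish for a generic fixed-point iteration; here it must be the uniform convexity of $g$ that forces $y_n-z_n\to0$, and hence $x_n-\TPR x_n\to0$, which is exactly what the demiclosedness step in the weak-convergence argument requires. The delicate point is that the uniform monotonicity of $B$ controls the $\prox_g$-shadow $z_n$ directly, whereas the statement asks for strong convergence of the $\prox_f$-shadow $y_n$; bridging the two---either through the value estimate above or through the demiclosedness and uniqueness argument---is where the real work lies, and care is needed because maximal monotone graphs are sequentially closed only for weak--strong (not weak--weak) convergence.
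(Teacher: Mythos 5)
The paper gives no proof of this Fact---it is quoted from Lions--Mercier and from Bauschke--Combettes---so your attempt can only be measured against those standard arguments. The first two thirds of your proposal are correct and follow exactly that route: the reflected-resolvent identity $\normsq{R_gu-R_gv}=\normsq{u-v}-4\innp{\prox_gu-\prox_gv,(u-\prox_gu)-(v-\prox_gv)}$ together with the uniform monotonicity of $\pt g$ yields $\normsq{x_{n+1}-\overline{x}}\le\normsq{x_n-\overline{x}}-4\phi(\norm{z_n-\overline{y}})$ for every $\overline{x}\in\fix\TPR$, hence Fej\'er monotonicity of $(x_n)_\nnn$ and strong convergence of $z_n=\prox_g(2\prox_fx_n-x_n)$ to $\overline{y}$.

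The gap you flag in your last paragraph, however, is not merely ``where the real work lies''; it cannot be bridged. Uniform convexity of $g$, the \emph{outer} function in $\TPR=R_gR_f$, controls only the $\prox_g$-shadow, and nothing forces $y_n-z_n=\tfrac{1}{2}(x_n-x_{n+1})$ to vanish. Take $X=\RR$, $f=0$, $g=\abs{\cdot}+\tfrac{1}{2}\abs{\cdot}^2$ (uniformly convex with modulus $t\mapsto t^2/2$) and $x_0=\tfrac{1}{2}$: here $\prox_g$ vanishes on $[-1,1]$, so $z_n\equiv0=\overline{y}$, while $x_{n+1}=R_gx_n=-x_n$ and hence $y_n=x_n=(-1)^n/2$. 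Thus $y_n\not\to\overline{y}$, and $(x_n)_\nnn$ has the two weak cluster points $\pm\tfrac{1}{2}\notin\fix\TPR=\{0\}$, so both of your proposed bridges---the value estimate and the demiclosedness argument---break down: without asymptotic regularity, Fej\'er monotonicity alone does not place weak cluster points in the fixed point set. What your computation actually establishes is the correct statement hiding behind this Fact: under uniform convexity of $g$ one obtains $\prox_g(2\prox_fx_n-x_n)\to\overline{y}$, whereas the advertised conclusion $\prox_fx_n\to\overline{y}$ requires the \emph{inner} function $f$ to be uniformly convex, in which case the same one-line inequality applied to $R_f$ instead of $R_g$ finishes the proof immediately. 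The weak convergence of $(x_n)_\nnn$ itself is not recoverable in either case, as the same example with the roles of $f$ and $g$ interchanged shows.
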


In the sequel we use the notation
\begin{equation}
g^{\veet}\colon X\to\left]-\infty,+\infty\right]\colon x\mapsto g(-x) .
\end{equation}
Recall that the Fenchel--Rockafellar dual of \cref{eq:prob}
is
\begin{equation}
\label{eq:dual}
\minimize_{x\in X} f^*(x)+g^*(-L^*x).
\end{equation}
\begin{remark}\
\begin{enumerate}
\item
One can readily verify that 
$\pt g^\veet =(-\Id)\circ\pt g \circ(-\Id)$.
Therefore, in view of
\cite[Theorem~A]{Rock1970} and
 \cite[Lemma~3.5~on~page~125~and~Lemma~3.6~on~page~133]{EckThesis}
 (see also \cite[Corollaries~4.2~and~4.3]{JAT2012})
we have\footnote{It is straightforward to
verify that 
$g^{\veet*}= (g^*)^\veet$ (see, e.g., \cite[Proposition~13.23(v)]{BC2017}).}
\begin{equation}
\label{eq:TDR:sd}
\TDR(f, g)=\TDR(f^*,g^{*\veet}), 
\end{equation}
and 
\begin{equation}
\label{eq:TPR:sd}
\TPR(f, g)=\TPR(f^*,g^{*\veet}).
\end{equation}
\item
When $(L,Y)=(\Id, X)$,
inclusion
\cref{eq:inc:inc}
reduces to: 
Find $y\in X$ such that
$0\in \pt f  (y)+\pt g(y)$
 and the \emph{dual} inclusion (corresponding to 
the
 Fenchel--Rockafellar dual \cref{eq:dual}) is:
Find $y\in X$ such that
$0\in \pt f^*  (y)-\pt g^*(-y)=(\pt f)^{-1}y-(\pt g)^{-1}(-y)$,
which in this case coincide with the Attouch--Thera dual  
of \cref{eq:inc:inc} (see \cite{AT}). 
\end{enumerate}
\end{remark}


One can use DR method to solve 
\cref{eq:dual} where $(f,g)$ in \cref{fact:DRA:conv}
is replaced by $(f^*,g^*\circ(-L^*))$.
Recalling \cref{eq:TDR:sd} 
 we learn that $\TDR=\TDR{(f^*,g^*\circ (-L^*))}
=\TDR{(f^{**},(g^*\circ (-L^*))^{\veet *})}=\TDR{(f,(g^*\circ L^*){^*})}$,
 where the last identity follows from
  \cite[Proposition~13.44]{BC2017}

In view of \cref{def:TDR} 
 and \cref{eq:TDR:sd}
we have
\begin{equation}
\label{def:TDR:dual}
\TDR
=\TDR{(f^*,g^*\circ (-L^*))}
=\TDR(f,(g^*\circ L^*){^*}) 
=\Id-\prox_f+\prox_{(g^*\circ L^*)^*}(2\prox_f-\Id).
\end{equation}
Similarly, under additional assumptions 
(see \cref{fact:PRA:conv}), 
one can use PR method to solve 
\cref{eq:dual} where $(f,g)$ in \cref{fact:PRA:conv}
is replaced by $(f^*,g^*\circ(-L^*))$.
In this case \cref{eq:def:TPR} and 
\cref{eq:TPR:sd} imply that
\begin{equation}
\label{def:TPR:dual}
\TPR
=\TPR{(f^*,g^*\circ (-L^*))}
=\TPR(f,(g^*\circ L^*){^*}) 
=(2\prox_{(g^*\circ L^*)^*}-\Id)(2\prox_f-\Id).
\end{equation}

%

For completeness,
we
provide a concrete proof
of the formula for $\prox_{(g^*\circ L^*)^*}$
in \cref{app:A}  (see \cref{prop:JC_form} below).
We point out that the 
 formula for $\prox_{(g^*\circ L^*)^*}$
in a more general setting
is given in \cite[Proposition~4.1]{Gabay83} 
(see also \cite[Section~10.6.4]{CP11}).

%
%


\section{ADMM and Douglas--Rachford method}
\label{sec:4}
In this section we discuss the 
equivalence of ADMM and DR method.
This equivalence was first introduced 
by Gabay in \cite[Section~5.1]{Gabay83}
(see also \cite[Remark~3.14]{BK12}). 
Let $(x_0,a_0,u_0)\in X^3$.
Throughout the rest of this section, we assume that
\begin{empheq}[box=\mybluebox]{equation}
\label{eq:DR:thisec}
(x_{n+1},y_n)_\nnn=(\TDR x_n, \prox_f x_n),
\end{empheq}
where
\begin{equation}
 \label{eq:TDR:here}
\TDR\coloneqq{\TDR}_{(f,(g^*\circ L^*){^*})}
=\Id-\prox_f+L(L^*L+\pt g)^{-1}L^*(2\prox_f-\Id).
\end{equation}
Note that  the second identity in \cref{eq:TDR:here} 
follows from \cref{def:TDR:dual}
 and \cref{prop:JC_form}\ref{prop:JC_form:iv}.
We also assume that
\begin{empheq}[box=\mybluebox]{equation*}
\text{
  $(a_n,u_n,b_{n+1})_\nnn$
 is defined as in \cref{eq:ADMM}.}
\end{empheq}
The following lemma will be used later to clarify 
the equivalence of
DR and ADMM.

\begin{lem}
\label{lem:1:step:DR:ADMM}
Let $(b_{-},a_{-},u_{-})\in Y\times X\times X$
 and set 
   \begin{subequations}
   \label{eq:1stepADMM}
 \begin{align}
 (b,a,u)&\coloneqq ((L^*L+\pt g)^{-1}(L^* a_{-}-L^* u_{-}),
\prox_f(Lb+u_{-}),
  u_{-}+Lb-a),
      \label{eq:1stepADMM:a}
  \\
   (b_+,a_+,u_+)&\coloneqq ((L^*L+\pt g)^{-1}(L^* a-L^* u),
\prox_f(Lb_++u),
  u+Lb_+-a_+).
      \label{eq:1stepADMM:b}
  \end{align}
  \end{subequations}
  Then 
  \begin{subequations}
\begin{align}
  \TDR(Lb+u_{-})&=Lb_++u,
  \label{eq:1stepDR:a}
  \\
\prox_f\TDR(Lb+u_{-})&=a_+.
  \label{eq:1stepDR:b}
\end{align}
\end{subequations}

\end{lem}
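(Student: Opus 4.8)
The plan is to introduce the single point $x:=Lb+u_-$ at which the Douglas--Rachford operator is evaluated in \cref{eq:1stepDR:a}, and then to identify each of the ADMM quantities $a$, $u$, and $b_+$ produced in \cref{eq:1stepADMM} with the corresponding ingredient of the formula \cref{eq:TDR:here} for $\TDR$ applied to $x$. The whole argument is then a direct substitution, with no analysis required beyond the definitions.

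First I would record the two identities that come immediately from \cref{eq:1stepADMM:a}. Since $a=\prox_f(Lb+u_-)=\prox_f x$ and $u=u_-+Lb-a$, and since $Lb+u_-=x$, we obtain $u=x-a=x-\prox_f x$; in particular $x=a+u$. Consequently the reflected point appearing in \cref{eq:TDR:here} simplifies, namely $2\prox_f x-x=2a-x=a-u$.

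Next I would match the resolvent term of $\TDR$. By \cref{eq:1stepADMM:b} we have $b_+=(L^*L+\pt g)^{-1}(L^*a-L^*u)=(L^*L+\pt g)^{-1}L^*(a-u)$, so that $L(L^*L+\pt g)^{-1}L^*(2\prox_f x-x)=Lb_+$. Substituting this together with $\prox_f x=a$ into \cref{eq:TDR:here} gives $\TDR x=x-\prox_f x+Lb_+=(x-a)+Lb_+=u+Lb_+$, which is precisely \cref{eq:1stepDR:a}. Applying $\prox_f$ to both sides and using the definition $a_+=\prox_f(Lb_++u)$ from \cref{eq:1stepADMM:b} then yields \cref{eq:1stepDR:b}.

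There is essentially no analytic obstacle; the content is entirely the bookkeeping of which ADMM variable reproduces which piece of the Douglas--Rachford formula. The one point that warrants care is keeping the two stages of the recursion straight, so that the update defining $b_+$ (which is built from $a$ and $u$) is the one feeding the resolvent term of $\TDR$ evaluated at $Lb+u_-$, and is not conflated with the earlier update defining $b$. Once the relation $x=a+u$ is in hand, both claimed identities follow at once.
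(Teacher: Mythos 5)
Your argument is correct and is essentially the paper's own proof: both proceed by direct substitution into \cref{eq:TDR:here}, identifying $\prox_f(Lb+u_-)=a$, rewriting $2a-(Lb+u_-)$ as $a-u$, and recognizing the resolvent term as $Lb_+$, after which \cref{eq:1stepDR:b} follows from \cref{eq:1stepADMM:b}. Your abbreviation $x=Lb+u_-$ only streamlines the same bookkeeping.
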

\begin{proof}
Indeed, it follows from \cref{eq:TDR:here},
\cref{eq:1stepADMM},
\cref{eq:1stepDR:a}
 and \cref{eq:1stepDR:b} that
\begin{subequations}
\begin{align}
\TDR(Lb+u_{-})&=(Lb+u_{-})-\prox_f(Lb+u_{-})\nonumber\\
&\quad+L(L^*L+\pt g)^{-1}{L^*}(2\prox_f(Lb+u_{-})-(Lb+u_{-}))\\
&=(Lb+u_{-})-a+L(L^*L+\pt g)^{-1}{L^*}(2a-(Lb+u_{-}))\\
&=(Lb+u_{-})-a+L(L^*L+\pt g)^{-1}{L^*}(a-(Lb+u_{-}-a))\\
&=u+L(L^*L+\pt g)^{-1}({L^*}a-{L^*}u)\\
&=Lb_++u,
\end{align}
\end{subequations}
which proves \cref{eq:1stepDR:a}.
Now \cref{eq:1stepDR:b} 
follows from combining \cref{eq:1stepDR:a}
 and \cref{eq:1stepADMM:b}.
\end{proof}

We now prove the main result in this section by induction.
\begin{thm}
\label{Prop:ADMM:DR}
The following hold:
\begin{enumerate}
\item 
\label{i:ADMM:to:DR}
{\bf (DR as ADMM iteration)}
Using DR method with
a starting point $x_0\in X$ to solve \cref{eq:dual} is equivalent to  
using ADMM with a
starting point $(a_0,u_0)\coloneqq 
(\prox_f x_0,x_0-\prox_f x_0)$ to solve 
\cref{eq:prob}, in the sense that
$(x_n)_{n\ge1}=(Lb_n+u_{n-1})_{n\ge1}$
 and
$(y_n)_{\nnn}=(a_n)_{\nnn}$.
\item
\label{prop:DR:to:ADMM}
{\bf (ADMM as DR
 iteration)}
Using ADMM with a
starting point $(a_0,u_0)\in X\times X$ 
to solve \cref{eq:prob} is equivalent to  
using DR method with a
starting point $x_0=Lb_1+u_0$ to solve 
\cref{eq:dual}, in the sense that
$(x_n)_{\nnn}=(Lb_{n+1}+u_{n})_{\nnn}$
 and
$(y_n)_{\nnn}=(a_{n+1})_{\nnn}$.
\end{enumerate}
\end{thm}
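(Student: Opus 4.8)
The plan is to establish both parts by induction on $n$, letting \cref{lem:1:step:DR:ADMM} perform each inductive step. That lemma is precisely a one-step dictionary between the two recursions: if a point has the form $Lb+u_-$, if $a=\prox_f(Lb+u_-)$, and if the next ADMM triple $(b_+,a_+,u_+)$ is generated from $(a,u)$ through \cref{eq:ADMM}, then a single application of $\TDR$ sends $Lb+u_-$ to the next ADMM point $Lb_++u$, and a subsequent $\prox_f$ returns $a_+$. So the whole argument reduces to checking that each DR iterate $x_n$ coincides with the appropriate $Lb+u_-$ and that the prescribed starting points align the two recursions for small $n$.

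For \cref{i:ADMM:to:DR} I would first treat the base case. The choice $(a_0,u_0)=(\prox_f x_0,\,x_0-\prox_f x_0)$ gives immediately $y_0=\prox_f x_0=a_0$ and, crucially, $x_0=a_0+u_0$ together with $\prox_f x_0=a_0$. Substituting these two relations into the explicit form \cref{eq:TDR:here} of $\TDR x_0$, and using $2\prox_f x_0-x_0=a_0-u_0$ and $x_0-\prox_f x_0=u_0$, collapses $\TDR x_0$ to $u_0+L(L^*L+\pt g)^{-1}(L^*a_0-L^*u_0)=Lb_1+u_0$; hence $x_1=Lb_1+u_0$ and then $y_1=\prox_f x_1=\prox_f(Lb_1+u_0)=a_1$ by \cref{eq:ADMM:b}. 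For the inductive step, assuming $x_n=Lb_n+u_{n-1}$ and $y_n=a_n$ for some $n\ge 1$, I would apply \cref{lem:1:step:DR:ADMM} to the consecutive ADMM triples $(b_n,a_n,u_n)$ and $(b_{n+1},a_{n+1},u_{n+1})$, whose defining relations \cref{eq:ADMM:a}--\cref{eq:ADMM:c} are exactly the lemma's hypotheses, to obtain $x_{n+1}=\TDR x_n=Lb_{n+1}+u_n$ and $y_{n+1}=\prox_f x_{n+1}=a_{n+1}$, which closes the induction.

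For \cref{prop:DR:to:ADMM} the bookkeeping is lighter, because the starting point $x_0=Lb_1+u_0$ is already in the form $Lb+u_-$ demanded by the lemma, so no separate base-case computation is needed: $y_0=\prox_f x_0=\prox_f(Lb_1+u_0)=a_1$ follows directly from \cref{eq:ADMM:b}. For the inductive step I would assume $x_n=Lb_{n+1}+u_n$ and apply \cref{lem:1:step:DR:ADMM} with the triples $(b_{n+1},a_{n+1},u_{n+1})$ and $(b_{n+2},a_{n+2},u_{n+2})$, obtaining $x_{n+1}=\TDR x_n=Lb_{n+2}+u_{n+1}$ and $y_{n+1}=a_{n+2}$, which advances the induction.

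The only genuinely delicate point is the base case of \cref{i:ADMM:to:DR}. There $x_0$ is \emph{not} handed to us as an expression $Lb+u_-$ produced by an ADMM step, so \cref{lem:1:step:DR:ADMM} does not apply verbatim; the specific starting point $(a_0,u_0)=(\prox_f x_0,\,x_0-\prox_f x_0)$ is exactly what forces $x_0=a_0+u_0$ and thereby makes the one short direct computation of $\TDR x_0$ reproduce the first ADMM quantities, in effect a hand-replay of the lemma's own calculation. Once this first step is verified, every subsequent step is a mechanical invocation of the lemma, and the two sequences remain identified for all $n$.
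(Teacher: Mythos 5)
Your proposal is correct and follows essentially the same route as the paper: the base cases are verified by the same direct substitution of $(a_0,u_0)=(\prox_f x_0,\,x_0-\prox_f x_0)$ (respectively $x_0=Lb_1+u_0$) into the explicit formula \cref{eq:TDR:here}, and each inductive step is the same invocation of \cref{lem:1:step:DR:ADMM}, merely parametrized by the triple $(b,a,u)$ rather than by $(b_-,a_-,u_-)$ as in the paper. No gaps.
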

\begin{proof}
\cref{i:ADMM:to:DR}:
Note that \cref{eq:DR:thisec}
implies that $y_0=a_0$.
Now, when $n=1$, using \cref{eq:DR:thisec}
we have 
$x_1=Tx_0=x_0-\prox_f x_0+L(L^*L+\pt g)^{-1}{L^*}(2\prox_fx_0-x_0)
=x_0-a_0+L(L^*L+\pt g)^{-1}{L^*}(2a_0-x_0)
=(x_0-a_0)+L(L^*L+\pt g)^{-1}{L^*}(a_0-(x_0-a_0))
=u_0+L(L^*L+\pt g)^{-1}{L^*}(a_0-u_0)
=u_0+Lb_1$.
Consequently, by \cref{eq:DR:thisec} we get
$y_1=\prox_f Tx_0=\prox_f x_1=\prox_f(u_0+Lb_1)=a_1$,
which verifies the base case.  
Now suppose for some $n\ge 1$
we have 
$x_n=Lb_n+u_{n-1}$
 and 
 $y_n=a_n$
  and use 
\cref{lem:1:step:DR:ADMM}
with $(b_-,a_-,u_{-})$
replaced by
$
(b_{n-1},a_{n-1},u_{n-1})$.

\cref{prop:DR:to:ADMM}:
At $n=0$, $x_0=Lb_1+u_0=Lb_{0+1}+u_0$,
 and therefore \cref{eq:DR:b} implies that
$y_0=\prox_fx_0=\prox_f(Lb_1+u_0)=a_1$
by \cref{eq:ADMM:b}.
Now suppose that for some $n\ge 0$ we have 
$
x_n=Lb_{n+1}+u_{n}
$
and
$
y_n=a_{n+1}.
$
The conclusion follows by
applying \cref{lem:1:step:DR:ADMM}
with $(b_-,a_-,u_{-})$
replaced by
$
(b_{n},a_{n},u_{n})$.
\end{proof}

\section{ADMM and Peaceman--Rachford method}
\label{sec:5}
We now turn to the equivalence of
ADMM with intermediate update of multiplier and 
PR method.
This equivalence was introduced in \cite[Section~5.2]{Gabay83}.
Given $(a_0,u_0)\in X\times X$, the ADMM 
with an \emph{intermediate} update of multiplier
applied to solve \cref{eq:prob}
generates four sequences $(a_n)_{\nnn}$,
$(u_n)_{\nnn}$,
$(b_n)_{n\ge 1}$ and  $(w_n)_{n\ge 1}$
via $(\forall \nnn)$:
\begin{subequations}
\label{eq:ADMM:all}
\begin{align}
b_{n+1}\coloneqq&(L^*L+\pt g)^{-1}(L^* a_n-L^* u_n),\label{eq:ADMM:PR:i}\\
w_{n+1}\coloneqq& u_n+Lb_{n+1}-a_{n},\label{eq:ADMM:PR:ii}\\
a_{n+1}\coloneqq&\prox_f(Lb_{n+1}+w_{n+1}),\label{eq:ADMM:PR:iii}\\
u_{n+1}\coloneqq& w_{n+1}+Lb_{n+1}-a_{n+1}.
\label{eq:ADMM:PR:iv}
\end{align}
\end{subequations}

\begin{fact}[\thmtit{convergence of ADMM with intermediate 
update of multipliers}]{\rm(see \cite[Theorem~5.3]{Gabay83}.)}
Suppose that $g$ is uniformly convex.
Let $(a_0,u_0)\in X \times X$,
 and let 
$(b_n)_{n\ge 1}$,
$(w_n)_{n\ge 1}$,
$(a_n)_{\nnn}$
and $(u_n)_{\nnn}$
be defined as in 
\cref{eq:ADMM:all}.
Then there exists $\overline{a}\in X$ 
such that 
 $a_n\to\overline{a}\in \argmin(f \circ L+g)$.
\end{fact}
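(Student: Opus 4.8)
The plan is to derive this from the convergence of the Peaceman--Rachford method (\cref{fact:PRA:conv}), by means of the Peaceman--Rachford counterpart of \cref{Prop:ADMM:DR} — the equivalence between ADMM with intermediate update of the multiplier applied to \cref{eq:prob} and the PR method applied to the dual \cref{eq:dual}, which is the subject of this section. Under that equivalence the sequence $(a_n)_\nnn$ is identified with the shadow sequence $\prox_f x_n$ of the PR iteration, so the strong convergence asserted here is exactly the strong convergence of the PR shadow delivered by \cref{fact:PRA:conv} under uniform convexity.

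First I would record the PR-analogue of the one-step identity \cref{lem:1:step:DR:ADMM}: in \cref{eq:TDR:here} replace the Douglas--Rachford average by the reflection product $\TPR=R_{(g^*\circ L^*)^*}R_f$, using $\prox_{(g^*\circ L^*)^*}=L(L^*L+\pt g)^{-1}L^*$, and verify by a direct computation on \cref{eq:ADMM:all} that one intermediate-update ADMM cycle reproduces one application of $\TPR(f,(g^*\circ L^*)^*)$, with $a_{n+1}=\prox_f$ of the updated iterate. The intermediate multiplier step \cref{eq:ADMM:PR:ii} is precisely what supplies the second reflection $R_{(g^*\circ L^*)^*}$ that distinguishes PR from DR. Feeding this into the induction used in the proof of \cref{Prop:ADMM:DR}, with the starting point $x_0=Lb_1+w_1$, yields the identifications $x_n=Lb_{n+1}+w_{n+1}$ and $a_{n+1}=\prox_f x_n$, so that $(a_n)_\nnn$ is the shadow sequence of PR on the dual.

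Next I must check the hypothesis of \cref{fact:PRA:conv} for the self-dual pair $(f,(g^*\circ L^*)^*)$ of \cref{def:TPR:dual}, namely that the second function $(g^*\circ L^*)^*$ is uniformly convex whenever $g$ is. Here I would use that the infimal postcomposition $h\colon x\mapsto\inf\{g(y)\mid Ly=x\}$ satisfies $h^*=g^*\circ L^*$, so that $(g^*\circ L^*)^*=\overline{h}$; since $L^*L$ is invertible, $L$ is injective and $h(x)=g((L^*L)^{-1}L^*x)$ for $x\in\ran L$ while $h(x)=+\infty$ otherwise. Thus on its domain $(g^*\circ L^*)^*$ is the precomposition of the uniformly convex $g$ with the bounded linear isomorphism $(L^*L)^{-1}L^*\colon\ran L\to Y$, and rescaling the modulus $\phi$ by the norms of $L$ and $(L^*L)^{-1}L^*$ furnishes a genuine modulus of uniform convexity for $(g^*\circ L^*)^*$.

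I expect this verification to be the main obstacle: the transfer of uniform convexity through conjugation and linear precomposition is transparent in finite dimensions, but in the general Hilbert-space setting it requires care with the closure $\overline{h}$ and with the restriction to the possibly proper subspace $\ran L$. Granting it, \cref{fact:PRA:conv} applies to $(f,(g^*\circ L^*)^*)$ and produces $\overline{x}\in\fix\TPR$ with $x_n\weakly\overline{x}$ and $\prox_f x_n\to\overline{y}$ strongly, where $\overline{y}$ is the unique minimizer of $f+(g^*\circ L^*)^*$. Transporting this back through the identifications of the second paragraph gives $a_n\to\overline{a}\coloneqq\overline{y}$; and since $\overline{y}$ minimizes $f+\overline{h}$ with optimal value $\inf_{y}(f(Ly)+g(y))$, one has $\overline{a}=L\overline{b}$ for some $\overline{b}\in\argmin(f\circ L+g)$, which is the claimed conclusion.
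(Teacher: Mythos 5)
Your argument is sound, but note that the paper does not prove this statement at all: it is recorded as a \emph{Fact} with a pointer to \cite[Theorem~5.3]{Gabay83}, so there is no in-paper proof to match. What you have done is assemble a self-contained derivation out of machinery the paper itself develops later, and the assembly is coherent and non-circular: the ADMM/PR correspondence of Section~\ref{sec:5} is a purely algebraic induction on the iterates (via \cref{lem:1:step:PR:ADMM}) that nowhere invokes this convergence Fact, so it may legitimately be used to transport \cref{fact:PRA:conv} for the pair $(f,(g^*\circ L^*)^*)$ back to the ADMM sequences; your observation that one must apply \cref{fact:PRA:conv} to the self-dual pair $(f,(g^*\circ L^*)^*)$ of \cref{def:TPR:dual}, rather than to $(f^*,g^*\circ(-L^*))$ whose second function is only uniformly smooth, is exactly the right move. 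The step you flag as the main obstacle---uniform convexity of $(g^*\circ L^*)^*$---is precisely \cref{lem:con:smooth}\ref{lem:con:smooth:iii} in \cref{app:B}, which the paper proves by a different and cleaner route: $g$ uniformly convex $\Rightarrow$ $g^*$ uniformly smooth $\Rightarrow$ $g^*\circ L^*$ uniformly smooth (rescaling the modulus by $\norm{L^*}$) $\Rightarrow$ $(g^*\circ L^*)^*$ uniformly convex, via \cref{eq:eqv:us:uc}. Your alternative through the infimal postcomposition $h=L\triangleright g$ also works---invertibility of $L^*L$ forces $\norm{Ly}^2=\innp{L^*Ly,y}\ge c\normsq{y}$, so $\ran L$ is closed, $h=g\circ(L^*L)^{-1}L^*+\iota_{\ran L}$ is already lower semicontinuous and hence equals $(g^*\circ L^*)^*$ with no closure defect, and the modulus transfers since $\phi$ is increasing---but it requires exactly the range and closure bookkeeping you anticipate, which the conjugate-smoothness route avoids. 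One last cosmetic point: the stated conclusion $\overline{a}\in\argmin(f\circ L+g)$ has a type mismatch ($a_n\in X$ while the argmin lives in $Y$); your reading $\overline{a}=L\overline{b}$ with $\overline{b}\in\argmin(f\circ L+g)$ is the correct interpretation.
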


In this section
we work under the additional assumption that
\begin{empheq}[box=\mybluebox]{equation*}
\text{$g$ 
is uniformly convex}.
\end{empheq}

Let $(x_0,a_0,u_0)\in X^3$.
Throughout the rest of this section we set
\begin{empheq}[box=\mybluebox]{equation}
\label{eq:it:TPR}
(x_{n+1},y_n)_\nnn=(\TPR x_n,\prox_f  x_n)_\nnn 
\end{empheq}
where
 \begin{equation}
 \label{eq:TPR:here}
\TPR\coloneqq {\TPR}_{(f,(g^*\circ L^*){^*})}
= 2L(L^*L+\pt g)^{-1}L^*(2\prox_fx_0-x_0)-2\prox_fx_0+x_0.
\end{equation}
Note that the second identity in 
\cref{eq:TPR:here} follows from \cref{def:TPR:dual}
 and \cref{prop:JC_form}\ref{prop:JC_form:iv}.
We also assume that
\begin{empheq}[box=\mybluebox]{equation*}
\text{
  $\left(a_n,u_n,b_{n+1},w_{n+1}\right)_\nnn$
 is defined as in \cref{eq:ADMM:all}.}
\end{empheq}
Before we proceed further, we prove the following 
useful lemma.
\begin{lem}
\label{lem:1:step:PR:ADMM}
Let $(b_{-},w_{-},a_{-},u_{-})\in Y\times X\times X\times X$
 and set 
 \begin{subequations}
\begin{align}
(b,w,a,u)&= ((L^*L+\pt g)^{-1}(L^* a_{-}-L^* u_{-}), u_{-}+Lb-a_{-},
\prox_f(Lb+w),
  w+Lb-a),
  \label{eq:setup:PR:a}
\\
  (b_+,w_+,a_+,u_+)
 &= ((L^*L+\pt g)^{-1}(L^* a-L^* u), u+Lb_{+}-a,
\prox_f(Lb_{+}+w_+),
  w_++Lb_{+}-a_{+}).
  \label{eq:setup:PR:b}
\end{align}
\end{subequations}
  Then 
  \begin{subequations}
\begin{align}
  \TPR(Lb+w)&=Lb_++w_+,
    \label{1step:PR:a}
    \\
\prox_f\TPR(Lb+w)&=a_+.
  \label{1step:PR:b}
\end{align}
\end{subequations}
\end{lem}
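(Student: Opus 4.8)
The plan is to mirror the strategy used for the Douglas--Rachford version in \cref{lem:1:step:DR:ADMM}: abbreviate $p\coloneqq Lb+w$, expand $\TPR p$ using the explicit operator form
\begin{equation*}
\TPR=2L(L^*L+\pt g)^{-1}L^*(2\prox_f-\Id)-2\prox_f+\Id
\end{equation*}
coming from \cref{eq:TPR:here}, and then recognize each resulting term as one of the ADMM-with-intermediate-update quantities in \cref{eq:setup:PR:b}. Since $a=\prox_f(Lb+w)=\prox_f p$ by \cref{eq:setup:PR:a}, the inner argument simplifies to $2\prox_f p-p=2a-p$, so that $\TPR p=2L(L^*L+\pt g)^{-1}L^*(2a-p)-2a+p$.

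The heart of the argument is to identify the two pieces of this expression. First I would show that $L(L^*L+\pt g)^{-1}L^*(2a-p)=Lb_+$. This reduces to the identity $2a-p=a-u$: indeed $u=w+Lb-a=p-a$ by the definition of $u$ in \cref{eq:setup:PR:a}, hence $a-u=2a-p$, and then $Lb_+=L(L^*L+\pt g)^{-1}(L^*a-L^*u)=L(L^*L+\pt g)^{-1}L^*(a-u)$ by the definition of $b_+$ in \cref{eq:setup:PR:b}. Second, the leftover term is $-2a+p=u-a$ (again using $u=p-a$), and the definition $w_+=u+Lb_+-a$ from \cref{eq:setup:PR:b} rewrites this as $u-a=w_+-Lb_+$. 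Substituting both identifications gives $\TPR p=2Lb_++(w_+-Lb_+)=Lb_++w_+$, which is \cref{1step:PR:a}.

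Finally, \cref{1step:PR:b} is immediate once \cref{1step:PR:a} is in hand: applying $\prox_f$ to both sides of $\TPR p=Lb_++w_+$ and using the definition $a_+=\prox_f(Lb_++w_+)$ from \cref{eq:setup:PR:b} yields $\prox_f\TPR p=a_+$. The whole proof is therefore a chain of substitutions with no analytic content beyond the formula for $\prox_{(g^*\circ L^*)^*}$ already recorded in \cref{prop:JC_form}. I expect the only real obstacle to be bookkeeping: keeping the intermediate multiplier $w$ and the final multiplier $u$ distinct throughout, since the intermediate update is precisely what separates this lemma from its Douglas--Rachford counterpart, and it is easy to conflate the two relations $u=p-a$ and $w_+=u+Lb_+-a$.
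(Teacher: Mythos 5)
Your proof is correct and follows essentially the same route as the paper's: expand $\TPR(Lb+w)$ via the explicit formula from \cref{eq:TPR:here}, use $a=\prox_f(Lb+w)$ and $u=Lb+w-a$ to identify $2a-(Lb+w)$ with $a-u$ and hence the composed term with $Lb_+$, then absorb the leftover $u-a$ into $w_+-Lb_+$ via the definition of $w_+$. The final step, applying $\prox_f$ to \cref{1step:PR:a} and invoking the definition of $a_+$ in \cref{eq:setup:PR:b}, is exactly how the paper concludes \cref{1step:PR:b}.
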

\begin{proof}
Indeed, by \cref{eq:TPR:here},
\cref{1step:PR:a}
 and \cref{1step:PR:b}
we have 
\begin{subequations}
\begin{align}
\TPR(Lb+w)
&=Lb+w-2\prox_f (Lb+w)
+2L(L^*L+\pt g)^{-1}{L^*}(2\prox_f(Lb+w)-(Lb+w))
\\
&=Lb+w-a-a
+2L(L^*L+\pt g)^{-1}{L^*}(a-(Lb+w-a))
\\
&=u-a
+2L(L^*L+\pt g)^{-1}{L^*}(a-u)=u-a
+2Lb_+
\\
&=Lb_++w_+,
\end{align}
\end{subequations}
which proves \cref{1step:PR:a}.
Now \cref{1step:PR:b} is a direct consequence of
\cref{1step:PR:a} in view of 
\cref{eq:setup:PR:b}.
\end{proof}
We are now ready for the 
main result in this section.

\begin{thm}
Suppose that $g$ is uniformly smooth.
Then the following hold:
\begin{enumerate}
\item
\label{prop:ADMM:to:PR}
{\bf (PR as ADMM iteration)}
Using PR method with a
starting point $x_0\in X$ to solve \cref{eq:dual} is equivalent to  
using ADMM with intermediate update of
multipliers with starting points $(a_0,u_0)\coloneqq 
(\prox_f x_0,x_0-\prox_f x_0)$ to solve 
\cref{eq:prob}, in the sense that
$(x_n)_{n\ge1}=(Lb_n+w_n)_{n\ge1}$
and 
$(y_n)_{\nnn}=(a_n)_{\nnn}$.
\item
\label{prop:PR:to:ADMM}
{\bf (ADMM as PR
 iteration)}
Using ADMM with
intermediate update of multiplier
with a
starting point $(a_0,u_0)\in X\times X$ to solve \cref{eq:prob} is equivalent to  
using PR method with starting point $x_0=Lb_1+w_{1}$ to solve 
\cref{eq:dual}, in the sense that
$(x_n)_{\nnn}=(Lb_{n+1}+w_{n+1})_{\nnn}$
and 
$(y_n)_{\nnn}=(a_{n+1})_{\nnn}$.

\end{enumerate}
\end{thm}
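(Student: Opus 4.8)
The plan is to mirror the induction carried out for the Douglas--Rachford case in \cref{Prop:ADMM:DR}, replacing the single-step lemma \cref{lem:1:step:DR:ADMM} by its Peaceman--Rachford analogue \cref{lem:1:step:PR:ADMM} and carrying the intermediate multiplier $w_n$ through the correspondence. In both parts the argument splits into a base case, verified by unwinding the explicit formula \cref{eq:TPR:here} for $\TPR$, followed by an inductive step that is a direct application of \cref{lem:1:step:PR:ADMM}.

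For \cref{prop:ADMM:to:PR} (PR as ADMM iteration), I would first note that the initialization $(a_0,u_0)=(\prox_f x_0,x_0-\prox_f x_0)$ yields $x_0=a_0+u_0$ and, by \cref{eq:it:TPR}, $y_0=\prox_f x_0=a_0$. The base case then reduces to computing $x_1=\TPR x_0$ from \cref{eq:TPR:here}: substituting $\prox_f x_0=a_0$ and $2\prox_f x_0-x_0=a_0-u_0$ gives $\TPR x_0=2L(L^*L+\pt g)^{-1}L^*(a_0-u_0)-a_0+u_0=2Lb_1-a_0+u_0$, where $b_1$ is read off from \cref{eq:ADMM:PR:i}. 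Since $w_1=u_0+Lb_1-a_0$ by \cref{eq:ADMM:PR:ii}, this equals $Lb_1+w_1$, and then $y_1=\prox_f x_1=\prox_f(Lb_1+w_1)=a_1$ by \cref{eq:ADMM:PR:iii}. For the inductive step, assuming $x_n=Lb_n+w_n$ and $y_n=a_n$ for some $n\ge 1$, I would invoke \cref{lem:1:step:PR:ADMM} with $(b_-,w_-,a_-,u_-)$ replaced by $(b_{n-1},w_{n-1},a_{n-1},u_{n-1})$; matching \cref{eq:setup:PR:a,eq:setup:PR:b} against \cref{eq:ADMM:all} forces $(b,w,a,u)=(b_n,w_n,a_n,u_n)$ and $(b_+,w_+,a_+,u_+)=(b_{n+1},w_{n+1},a_{n+1},u_{n+1})$, so \cref{1step:PR:a} delivers $x_{n+1}=\TPR(Lb_n+w_n)=Lb_{n+1}+w_{n+1}$ and \cref{1step:PR:b} delivers $y_{n+1}=a_{n+1}$.

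For \cref{prop:PR:to:ADMM} (ADMM as PR iteration) the base case is essentially free: the starting point is defined as $x_0=Lb_1+w_1=Lb_{0+1}+w_{0+1}$, and \cref{eq:it:TPR} with \cref{eq:ADMM:PR:iii} gives $y_0=\prox_f x_0=\prox_f(Lb_1+w_1)=a_1=a_{0+1}$. The inductive step is again a single application of \cref{lem:1:step:PR:ADMM}, now with $(b_-,w_-,a_-,u_-)$ replaced by $(b_n,w_n,a_n,u_n)$; the same matching gives $(b,w)=(b_{n+1},w_{n+1})$ and $(b_+,w_+,a_+)=(b_{n+2},w_{n+2},a_{n+2})$, whence $x_{n+1}=\TPR x_n=Lb_{n+2}+w_{n+2}=Lb_{(n+1)+1}+w_{(n+1)+1}$ and $y_{n+1}=a_{n+2}=a_{(n+1)+1}$, closing the induction.

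The only genuinely computational point is the base case of \cref{prop:ADMM:to:PR}, and the one place demanding care is the index bookkeeping: unlike the Douglas--Rachford correspondence $x_n=Lb_n+u_{n-1}$ of \cref{Prop:ADMM:DR}, the intermediate multiplier enters here and the correct correspondence is $x_n=Lb_n+w_n$. Once \cref{lem:1:step:PR:ADMM} is in hand no serious obstacle or estimate remains; I would also observe that the uniform-convexity hypothesis on $g$ is immaterial to the purely algebraic equivalence of the two iterations and is present only so that, through \cref{fact:PRA:conv}, the Peaceman--Rachford iteration is meaningfully convergent.
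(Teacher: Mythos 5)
Your proposal is correct and follows essentially the same route as the paper's own proof: an explicit computation of the base case $x_1=\TPR x_0=2Lb_1+u_0-a_0=Lb_1+w_1$ for part \cref{prop:ADMM:to:PR}, a trivial base case for part \cref{prop:PR:to:ADMM}, and inductive steps that invoke \cref{lem:1:step:PR:ADMM} with exactly the index shifts you describe. The only difference is cosmetic — you carry the claim $y_n=a_n$ explicitly through the induction and remark on the role of the convexity hypothesis, which the paper leaves implicit.
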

\begin{proof}
We proceed by induction.
\cref{prop:ADMM:to:PR}:
By \cref{eq:it:TPR}
and \cref{eq:ADMM:PR:i}
we have 
$x_1=\TPR x_0
=2\prox_g(2\prox_fx_0-x_0)-(2\prox_fx_0-x_0)
=x_0-2\prox_f x_0+L(L^*L+\pt g)^{-1}{L^*}(2\prox_fx_0-x_0)
=x_0-2a_0+2L(L^*L+\pt g)^{-1}{L^*}(2a_0-x_0)
=(x_0-a_0)-a_0+2L(L^*L+\pt g)^{-1}{L^*}(a_0-(x_0-a_0))
=u_0-a_0+2Lb_1
=u_0-a_0+Lb_1+Lb_1
=Lb_1+w_1$,
which verifies the base case.  
Now suppose for some $n\ge 1$
we have 
$x_n=Lb_n+w_n.$
The conclusion follows from applying \cref{lem:1:step:PR:ADMM}
with $(b_{-},w_{-},a_{-},u_{-})$ replaced 
by $(b_{n-1},w_{n-1},a_{n-1},u_{n-1})$
in view of \cref{eq:ADMM:all}.

\cref{prop:PR:to:ADMM}:
At $n=0$, the base case clearly holds.
Now suppose that for some $n\ge 0$ we have 
$x_n=Lb_{n+1}+w_{n+1}$
 and $y_n=a_{n+1}$
and use \cref{lem:1:step:PR:ADMM}
with $(b_{-},w_{-},a_{-},u_{-})$ replaced 
by $(b_{n},w_{n},a_{n},u_{n})$
in view of \cref{eq:ADMM:all}.
\end{proof}

\section{Chambolle--Pock and Douglas--Rachford methods}
\label{sec:CP}
In this section we survey the recent work by 
O'Connor and Vandenberghe \cite{OV17}
 concerning
the equivalence of Douglas--Rachford 
method and Chambolle--Pock method.
(For a detailed study of this 
correspondance in the more general framework 
of the primal-dual hybrid gradient method
and DR method with relaxation as well as connection 
to linearized ADMM we refer the reader to
\cite{OV17}.)
We work under the assumption 
 that\footnote{The assumption
that $\norm{A}\le 1$ is not restrictive.
Indeed, if $\norm{A}> 1$, one can always 
choose $\gamma\in \left]0,1/\norm{A}\right]$ 
 and work with $\gamma A$, instead of A.}
\begin{empheq}[box=\mybluebox]{equation}
\label{eq:cq:L}
\text{ $A \colon X\to Y$ is linear and that $\norm{A}\le 1$.}
\end{empheq} 

Consider the problem
 \begin{equation}
 \label{eq:prob:CP}
 \minimize_{x\in X} f(x)+g(Ax)
 \end{equation}
 and its Fenchel--Rockafellar dual
 given by
  \begin{equation}
 \label{eq:prob:CP}
 \minimize_{x\in X} f^*(-Ax)+g^*(x).
 \end{equation}
 
To proceed further, in the following we assume that
\begin{equation}
  \label{eq:cq:A:i}
\argmin(f+g\circ A) \neq \fady
\text{~and ~}
 0\in \sri(\dom g-A(\dom f)).
\end{equation}

Note that 
\cref{eq:cq:A:i}
implies that (see, e.g., \cite[Proposition~27.5(iii)(a)1]{BC2017})
 \begin{equation}
 \label{eq:connec:2:inc:CP}
\argmin(f+g\circ A)
=\zer( \pt f+\pt (g\circ A))
=\zer(\pt f+A^*\circ (\pt g) \circ A)\neq \fady.
 \end{equation}
 
In view of \cref{eq:connec:2:inc:CP},
solving \cref{eq:prob:CP} is equivalent to solving the 
inclusion:  
\begin{equation}
\label{eq:inc:inc:CP}
\text{Find $x\in X$ such that
$0\in \pt f(x)+A^*(\pt g (A x))$.}
\end{equation}

The Chambolle--Pock (CP) method
 applied 
with a staring point  $(u_0,v_0)\in X \times Y$
to solve \cref{eq:prob:CP} generates the sequences 
$({u}_n)_{\nnn}$,
and
$({v}_n)_{\nnn}$ via:
\begin{subequations}
\label{eq:CP:all}
\begin{align}
{u}_n
&=\prox_f(u_{n-1}-A^* v_{n-1}),
\label{eq:CP:a}
\\
{v}_n
&=\prox_{g^*}(v_{n-1}+A(2{u}_n-u_{n-1}))
.
\label{eq:CP:b}
\end{align}
\end{subequations}

\begin{fact}[\thmtit{convergence of Chambolle--Pock method}]{\rm 
(see \cite[Theorem~1]{CP2010}
and also
\cite[Theorem~3.1]{Cond13}.)}
\label{fact:CPA:conv}
 Let $(u_0,v_0)\in X \times Y$
 and let $(u_n)_\nnn$
  and $(v_n)_\nnn$
   be defined as in \cref{eq:CP:all}.
Then there exists 
$(\overline{u},\overline{v})\in X \times Y$
such that $  (u_n,v_n)_\nnn\weakly(\overline{u},\overline{v})$,
$\overline{u}\in \argmin(f+g\circ A)$ 
and
$\overline{v}\in \argmin(f^*\circ(-A^*)+g^*)$.
\end{fact}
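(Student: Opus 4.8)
The plan is to read the Chambolle--Pock recursion \cref{eq:CP:all} as a single fixed-point iteration on the product space $Z\coloneqq X\times Y$ and to extract weak convergence from the same firmly nonexpansive machinery that underlies \cref{fact:DRA:conv}. Writing $z_n\coloneqq(u_n,v_n)$, the optimality system behind \cref{eq:inc:inc:CP} and its dual is the primal--dual inclusion $0\in(M+S)z$, where $M\coloneqq\partial f\times\partial g^*$ is the diagonal maximally monotone operator and $S\colon Z\to Z\colon(x,v)\mapsto(A^*v,-Ax)$ is bounded, linear and skew. Since $S$ has full domain, $M+S$ is maximally monotone; and because a primal solution $x$ produces, via $v\in\partial g(Ax)$, a pair $(x,v)\in\zer(M+S)$, \cref{eq:connec:2:inc:CP} guarantees $\zer(M+S)\neq\emp$, its two coordinate projections lying in $\argmin(f+g\circ A)$ and $\argmin(f^*\circ(-A^*)+g^*)$ respectively.

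The key computation is that \cref{eq:CP:a}--\cref{eq:CP:b} are exactly the preconditioned proximal-point step $P(z_{n-1}-z_n)\in(M+S)z_n$, that is,
\begin{equation}
z_n=(P+M+S)^{-1}Pz_{n-1},\qquad P\coloneqq\begin{pmatrix}\Id&-A^*\\-A&\Id\end{pmatrix}.
\end{equation}
Reading off the two coordinates of this inclusion returns $u_n=\prox_f(u_{n-1}-A^*v_{n-1})$ and $v_n=\prox_{g^*}(v_{n-1}+A(2u_n-u_{n-1}))$ after the $A^*v_n$ and $Au_n$ terms cancel. The hypothesis $\norm{A}\le1$ enters precisely here: for every $(x,v)\in Z$ one has $\langle(x,v),P(x,v)\rangle=\norm{x}^2-2\langle Ax,v\rangle+\norm{v}^2\ge(\norm{x}-\norm{v})^2\ge0$, so $P$ is positive semidefinite.

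When $\norm{A}<1$ the argument is routine: $P$ is then positive definite, $\langle\cdot,\cdot\rangle_P\coloneqq\langle\cdot,P\cdot\rangle$ is an equivalent inner product, and $(P+M+S)^{-1}P=(\Id+P^{-1}(M+S))^{-1}$ is the resolvent of the $\langle\cdot,\cdot\rangle_P$-maximally monotone operator $P^{-1}(M+S)$, hence firmly nonexpansive in that metric. The standard weak-convergence theory for firmly nonexpansive (averaged) operators---the very machinery behind \cref{fact:DRA:conv}---then yields $z_n\weakly\overline{z}=(\overline{u},\overline{v})\in\fix\big((P+M+S)^{-1}P\big)=\zer(M+S)$, and the coordinate description of $\zer(M+S)$ above gives the two membership claims.

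The main obstacle is the boundary case $\norm{A}=1$, where $P$ degenerates to a positive \emph{semi}definite form, $\langle\cdot,\cdot\rangle_P$ is only a seminorm, and the firmly nonexpansive convergence theory no longer applies verbatim. This is exactly the difficulty the equivalence of this section is designed to bypass: following O'Connor and Vandenberghe \cite{OV17}, one rewrites \cref{eq:CP:all} as a genuine Douglas--Rachford iteration \cref{eq:DR:all} on a lifted reformulation of the problem whose operator \emph{is} firmly nonexpansive in the standard inner product. Weak convergence of the DR iterates and of the shadow sequence is then supplied directly by \cref{fact:DRA:conv}, and undoing the change of variables transports this convergence back to $(u_n,v_n)$, with the limits identified through \cref{eq:connec:2:inc:CP} as before. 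Verifying that the reformulation reproduces \cref{eq:CP:a}--\cref{eq:CP:b} coordinate-wise, and that $\norm{A}\le1$ is precisely what makes the lift well defined, is the one calculation that requires care.
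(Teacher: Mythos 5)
You should first note that the paper does not prove this statement at all: it is recorded as a \emph{Fact} and justified only by the citations to \cite[Theorem~1]{CP2010} and \cite[Theorem~3.1]{Cond13}, so your argument is measured against the literature rather than against a proof in the text. What you give is essentially the standard modern proof, correctly assembled. The identity $P(z_{n-1}-z_n)\in(M+S)z_n$ with $M=\pt f\times\pt g^{*}$, $S(x,v)=(A^{*}v,-Ax)$ and $P=\bigl(\begin{smallmatrix}\Id&-A^{*}\\-A&\Id\end{smallmatrix}\bigr)$ does reproduce \cref{eq:CP:a}--\cref{eq:CP:b} coordinate-wise; $M+S$ is maximally monotone since $S$ is skew, bounded and everywhere defined; \cref{eq:connec:2:inc:CP} yields a zero of $M+S$ whose coordinates solve the primal and the dual problem; and for $\norm{A}<1$ the operator $(P+M+S)^{-1}P$ is the resolvent of $P^{-1}(M+S)$ in the equivalent inner product $\scal{\cdot}{P\cdot}$, hence firmly nonexpansive there, and the machinery behind \cref{fact:DRA:conv} gives $(u_n,v_n)\weakly(\overline{u},\overline{v})\in\zer(M+S)$. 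Your treatment of the boundary case $\norm{A}=1$ through the O'Connor--Vandenberghe lift is also legitimate and not circular, because \cref{prop:prop:DR:2:CP} is a purely algebraic correspondence of iterates proved independently of any convergence statement; combined with \cref{fact:DRA:conv} and \cref{prop:prod:p}\ref{prop:prod:p:v} it gives $u_n\weakly\overline{u}\in\argmin(f+g\circ A)$. The only step you leave genuinely underdeveloped is the dual sequence in that boundary case: from $\bx_n=(u_n,0)-B^{*}v_n$ one gets $v_n=B\bigl((u_n,0)-\bx_n\bigr)\weakly B\bigl((\overline{u},0)-\overline{\bx}\bigr)=:\overline{v}$ by weak continuity of $B$, but identifying $\overline{v}$ as a minimizer of $f^{*}\circ(-A^{*})+g^{*}$ is not supplied by \cref{fact:DRA:conv}, which certifies only primal optimality of the shadow limit; one must extract $B^{*}\overline{v}=\prox_{(g\circ B)^{*}}(2\prox_{\widetilde{f}}\overline{\bx}-\overline{\bx})$ from the fixed-point relation for $\overline{\bx}$ via \cref{lem:DR:gA} and \cref{prop:prod:p}\ref{prop:prod:p:ix}, and then use injectivity of $B^{*}$ together with \cref{fact:func:sd}\ref{eq:sb:inv:conj} to read off the dual optimality condition. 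This is routine, but it is the one calculation your sketch gestures at rather than performs.
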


It is known that the method in
\cref{eq:CP:all} reduces to DR method 
(see, e.g., \cite[Section~4.2]{CP2010}) 
when $A=\Id$. 
We state this equivalence 
in \cref{prop:DR:2:CP} below.
\begin{prop}[\thmtit{DR as a CP iteration}]
\label{prop:DR:2:CP}
Suppose that $A=\Id$.
Then, using 
DR method, defined as in \cref{eq:DR:all}, 
with a starting point $x_0\in X$ 
to solve \cref{eq:prob:CP}
is equivalent to using 
CP method with
a starting point $(u_0,v_0)\in \menge{(u,v)}{u-v=x_0}
\subseteq X\times X$ 
to solve \cref{eq:prob:CP}
in the sense that
$(x_n)_\nnn=(u_n-v_n)_\nnn$
 and 
 $(y_n)_\nnn=(u_n)_\nnn$.
\end{prop}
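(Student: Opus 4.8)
The plan is to proceed by a single induction on $n$ that simultaneously tracks two correspondences: the DR iterate $x_n$ equals the difference $u_n-v_n$ of the CP iterates, and the DR shadow point $y_n=\prox_f x_n$ is reproduced by the primal CP update. Since $A=\Id$ forces $A^*=\Id$, the CP recursion \cref{eq:CP:all} collapses to $u_n=\prox_f(u_{n-1}-v_{n-1})$ and $v_n=\prox_{g^*}(v_{n-1}+2u_n-u_{n-1})$, so no linear operator survives in the reflections. The one structural ingredient I would invoke is the Moreau decomposition $\prox_{g^*}=\Id-\prox_g$ (see, e.g., \cite{BC2017}); this is exactly what converts the dual CP step \cref{eq:CP:b}, written with $\prox_{g^*}$, into the $\prox_g$ that appears inside $\TDR$ in \cref{def:TDR}.

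For the base case I would use the defining constraint $u_0-v_0=x_0$ on the starting point, which is $x_0=u_0-v_0$ verbatim. Feeding this into the primal update gives $u_1=\prox_f(u_0-v_0)=\prox_f x_0=y_0$ by \cref{eq:CP:a}, establishing the first instance of the shadow correspondence and fixing the index alignment: the primal iterate produced at step $n+1$ is the DR shadow point $y_n$.

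The inductive step is the heart of the argument. Assuming $x_n=u_n-v_n$, I would first read off $u_{n+1}=\prox_f(u_n-v_n)=\prox_f x_n=y_n$ directly from \cref{eq:CP:a}. Then I would apply the Moreau decomposition to the dual update \cref{eq:CP:b}, writing
\begin{equation*}
v_{n+1}=\prox_{g^*}(v_n+2u_{n+1}-u_n)=(v_n+2u_{n+1}-u_n)-\prox_g(v_n+2u_{n+1}-u_n),
\end{equation*}
and subtract it from $u_{n+1}$. Using the inductive hypothesis, the reflected argument simplifies to $v_n+2u_{n+1}-u_n=2u_{n+1}-(u_n-v_n)=2\prox_f x_n-x_n$, whence
\begin{equation*}
u_{n+1}-v_{n+1}=(u_n-v_n)-u_{n+1}+\prox_g(2\prox_f x_n-x_n)=x_n-\prox_f x_n+\prox_g(2\prox_f x_n-x_n).
\end{equation*}
The right-hand side is precisely $\TDR x_n=x_{n+1}$ by \cref{def:TDR}, which closes the induction on $x_n=u_n-v_n$ and simultaneously confirms the shadow correspondence $y_n=\prox_f x_n=u_{n+1}$ claimed in the proposition.

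I expect no serious obstacle: the content is entirely the correct use of the Moreau decomposition together with careful index bookkeeping, since the CP primal iterate is generated one step ``ahead'' of the matching DR shadow point. The only genuinely essential hypothesis is $A=\Id$, which removes $A$ and $A^*$ from the reflections and lets the CP dual step line up with the $\prox_g$-reflection inside $\TDR$; for a general bounded $A$ this clean cancellation fails, which is precisely why the full correspondence demands the more elaborate reformulation of \cite{OV17}.
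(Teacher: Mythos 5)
Your proof is correct and is essentially the paper's argument: a single induction on $x_n=u_n-v_n$ whose inductive step is nothing but Moreau's decomposition $\prox_{g^*}=\Id-\prox_g$. The only cosmetic difference is where that identity is deployed: the paper first packages it into \cref{lem:DR:gA}, rewriting the DR step as $x_{n+1}=\prox_f x_n-\prox_{g^*}(2\prox_f x_n-x_n)$ and then matching this to the CP dual update verbatim, whereas you leave DR in the form \cref{def:TDR} and instead convert the CP dual step into a $\prox_g$; the two computations are mirror images of one another. One substantive remark in your favor: the index alignment $y_n=u_{n+1}$ that you derive is the one actually forced by the recursions, since $u_{n+1}=\prox_f(u_n-v_n)=\prox_f x_n=y_n$, while $y_0=u_0$ would require $\prox_f(u_0-v_0)=u_0$, which fails for a generic admissible starting point $(u_0,v_0)$ with $u_0-v_0=x_0$. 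The proposition's claim $(y_n)_\nnn=(u_n)_\nnn$ and the paper's inductive hypothesis ``$y_n=u_n$'' therefore carry an off-by-one; your version is consistent with the displayed computation inside the paper's own proof and with the alignment $(\by_n)_{\nnn}=(u_{n+1},0)_\nnn$ stated in \cref{prop:prop:DR:2:CP}.
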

\begin{proof}
We use induction.
When $n=0$, the base case is
obviously true.
Now suppose that
for some $n\ge 0$
we have $x_n=u_n-v_n$
 and $y_n=u_n$.
 Then, 
in view of \cref{lem:DR:gA} below we have 
 $x_{n+1}
 =\prox_f x_n-\prox_{g^*}(2\prox_f x_n-x_n)
 =\prox_f(u_n-v_n)-\prox_{g^*}(2\prox_f (u_n-v_n)-(u_n-v_n))
 =u_{n+1}-\prox_{g^*}(v_n+2u_{n+1}-u_n)=u_{n+1}-v_{n+1}
 $. 
 The claim about $y_{n+1}$
 follows directly and the proof is complete.
\end{proof}
\subsection*{Chambolle--Pock as a DR iteration: The 
O'Connor--Vandenberghe technique}
Let $Z$ be a real Hilbert space.
In the following, we assume
 that $C\colon Z\to Y$ is linear and that
\begin{equation}
\label{eq:assmp:B}
\text{$B\colon X \times Z \to Y
\colon (x,z) \mapsto Ax+Cz$ satisfies that $BB^*=\Id$}.
\end{equation}
Note that one possible choice of $C$ is to
set $C^2\coloneqq \Id-AA^*$, where 
the existence of 
$C$ follows from, e.g., \cite[Theorem~on~page~265]{RieszNagy}.
Now consider the problem
 \begin{equation}
 \label{eq:prob:B}
 \minimize_{(x,z)\in X\times Z} 
 \widetilde{f}(x,z)+g(B(x,z)),
 \end{equation}
 where 
 \begin{equation}
 \label{eq:def:ftild}
 \widetilde{f}\colon X\times Z\to \left]-\infty,+\infty\right]
 \colon (x,z)\mapsto
f(x)+\iota_{\{0\}} (z).
 \end{equation}
The following result, proved in \cite[Section~4]{OV17} 
in the more general framework of primal-dual
hybrid gradient method,
provides an elegant way to construct
the correspondence between the DR sequence
 when applied to solve \cref{eq:prob:B}
 and the CP sequence when applied to  
solve \cref{eq:prob:CP}.
We restate the proof for the sake 
of completeness.
\begin{prop}[\thmtit{CP corresponds to a DR iteration}]
\label{prop:prop:DR:2:CP}
Using CP method with
starting point $(u_0,v_0)\in X\times Z$
 to solve \cref{eq:prob:CP}
corresponds to  
using DR with starting point $\bx_0\coloneqq 
(u_0,0)-B^*v_0\in X\times Z$ to solve 
\cref{eq:prob:B}, in the sense that
$(\bx_n)_\knn 
=((u_n,0)-B^* v_n)_{\nnn}$
 and
$(\by_n)_{\nnn}
=({u}_{n+1},0)_\nnn$.

\end{prop}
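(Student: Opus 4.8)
The plan is to argue by induction on $n$, in the same spirit as the ADMM/DR and ADMM/PR equivalences of the previous sections, tracking the two claimed identities $\bx_n = (u_n,0) - B^* v_n$ and $\by_n = (u_{n+1},0)$ together. The first thing I would record is the shape of the DR operator driving \cref{eq:prob:B}, namely the one attached to the lifted pair $(\widetilde f, g\circ B)$, so that $\by_n = \prox_{\widetilde f}\bx_n$ and $\bx_{n+1} = \TDR\bx_n = \bx_n - \prox_{\widetilde f}\bx_n + \prox_{g\circ B}(2\prox_{\widetilde f}\bx_n - \bx_n)$. Two simplifications make the whole correspondence run. First, since $\widetilde f(x,z) = f(x) + \iota_{\{0\}}(z)$ separates, $\prox_{\widetilde f}(x,z) = (\prox_f x, 0)$; in particular the $z$-coordinate of $\by_n$ is forced to $0$, matching the claimed $(u_{n+1},0)$. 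Second, because $BB^* = \Id$, the composition $g\circ B$ has the closed-form proximal map $\prox_{g\circ B}(\bx) = \bx + B^*(\prox_g(B\bx) - B\bx)$, which I would invoke as the key analytic input; it is precisely the reason the hypothesis $BB^*=\Id$ is imposed, and is the $B$-analogue of the composition formula behind \cref{prop:JC_form}.

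For the base case and, more generally, for the $\by_n$ identity, I would use $B^* v = (A^* v, C^* v)$ to write $(u_n,0) - B^* v_n = (u_n - A^* v_n, -C^* v_n)$. Applying $\prox_{\widetilde f}$ annihilates the $z$-slot and returns $(\prox_f(u_n - A^* v_n), 0)$, which by the CP update \cref{eq:CP:a} equals $(u_{n+1},0)$. Thus, once $\bx_n = (u_n,0) - B^* v_n$ is known, the identity $\by_n = \prox_{\widetilde f}\bx_n = (u_{n+1},0)$ is immediate; the $n=0$ instance holds by the very definition of $\bx_0$.

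The heart of the inductive step is the recursion $\bx_{n+1} = \bx_n - \by_n + \prox_{g\circ B}(2\by_n - \bx_n)$. Writing $\bw \coloneqq 2\by_n - \bx_n = 2(u_{n+1},0) - \bx_n$ and applying $B$, the decisive cancellation is $B\bw = A(2u_{n+1} - u_n) + (AA^* + CC^*)v_n = v_n + A(2u_{n+1} - u_n)$, where I use $AA^* + CC^* = BB^* = \Id$. This is exactly the argument of $\prox_{g^*}$ in the CP update \cref{eq:CP:b}. Moreau's identity $\prox_g = \Id - \prox_{g^*}$ then gives $\prox_g(B\bw) - B\bw = -\prox_{g^*}(B\bw) = -v_{n+1}$, so the proximal-composition formula yields $\prox_{g\circ B}(\bw) = \bw - B^* v_{n+1}$. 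Substituting back and using $\bw = 2\by_n - \bx_n$ collapses $\bx_n - \by_n + \bw = \by_n = (u_{n+1},0)$, leaving $\bx_{n+1} = (u_{n+1},0) - B^* v_{n+1}$; this closes the induction, and $\by_{n+1} = (u_{n+2},0)$ then follows from the previous paragraph.

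I expect the main obstacle to be organizational rather than deep: keeping the $X\times Z$ bookkeeping straight and making sure the two places where $BB^* = \Id$ enters are invoked correctly, namely once through the proximal formula for $g\circ B$ and once through the collapse $AA^* + CC^* = \Id$ inside $B\bw$. Both must fire so that $B\bw$ lands exactly on the CP $v$-update argument and Moreau's identity converts the DR $\prox_g$ into the CP $\prox_{g^*}$; verifying that these match up, rather than any single estimate, is where the care is required.
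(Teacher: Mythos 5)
Your proof is correct and follows essentially the same route as the paper: induction on the pair of identities, with $\prox_{\widetilde f}(x,z)=(\prox_f x,0)$ handling the $\by_n$ claim and $BB^*=\Id$ firing exactly twice (once in the composition prox formula, once to rewrite $v_n=BB^*v_n$ so that $B(2\by_n-\bx_n)$ lands on the argument of the CP $v$-update). The only cosmetic difference is that you work with the primal form $\prox_{g\circ B}=\Id+B^*(\prox_g-\Id)B$ and apply Moreau's identity inside the computation, whereas the paper applies Moreau once at the level of the DR recursion (\cref{lem:DR:gA}) and then uses $\prox_{(g\circ B)^*}=B^*\prox_{g^*}B$ from \cref{prop:prod:p}; the two bookkeepings are equivalent.
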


\begin{proof}
We proceed by induction.
When $n=0$, 
by assumption we have 
$\bx_0=
(u_0,0)-B^* v_0$.
It follows from \cref {prop:prod:p}\ref{prop:prod:p:i:i}\&\ref{prop:prod:p:ix}
below
that $\by_0=\prox_{\tilde{f}}\bx_0
=\prox_{\tilde{f}}((u_0,0)-(A^* v_0,C^* v_0))
=\prox_{\tilde{f}}(u_0-A^* v_0,-C^* v_0)
=(\prox_f(u_0-A^* v_0),0)$.
Now suppose that for some $n\ge 0$ we have
\begin{subequations}
\begin{align}
\bx_n&=(u_n,0)-B^* v_n,
\\
\by_n
&=({u}_{n+1},0).
\end{align}
\end{subequations}
Then 
\begin{align}
(u_{n+1},0)-B^* v_{n+1}
&=(u_{n+1},0)-B^*(\prox_{g^*}(v_{n}+A(2u_{n+1}-u_{n})))
\nonumber
\\
&=\by_n
-B^*(\prox_{g^*}(v_{n}+B(2(u_{n+1},0)-(u_{n},0))))
\nonumber
\\
&=\by_n
-B^*(\prox_{g^*}(BB^* v_{n}+B(2(u_{n+1},0)-(u_{n},0))))
\nonumber
\\
&=\by_n
-B^*\prox_{g^*}B(2(u_{n+1},0)-((u_{n},0)-B^* v_{n})))
\nonumber
\\
&=\by_n
-\prox_{(g\circ B)^*}(2\by_n-\bx_n))=\bx_{n+1},
\end{align}
where the last identity follows from 
 \cref{eq:R:DR:rho:ii} below
applied with $A$ replaced by $B$.
Now by \cref{eq:CP:a} we have 
$
(u_{n+2},0)
=(\prox_f(u_{n+1}-A^* v_{n+1}),0)
=(\prox_f(u_{n+1}-A^* v_{n+1}),0)
=\prox_{\tilde{f}} (u_{n+1}-A^* v_{n+1},-C^* v_{n+1})
=\prox_{\tilde{f}} ((u_{n+1},0)-(A^* v_{n+1},C^* v_{n+1}))
=\prox_{\tilde{f}} ((u_{n+1},0)-B^* v_{n+1})
=\prox_{\tilde{f}} \bx_{n+1}
=\by_{n+1}
$.
\end{proof}

\section{Dykstra's method and the Method of Alternating Projections}
\label{sec:6}

In this section we assume that
\begin{empheq}[box=\mybluebox]{equation*}
\label{UV:assmp}
U \text{~and~} V \text{~are nonempty closed convex subsets of~}X
\text{~such that~} U\cap V\neq \fady.
\end{empheq} 
In the sequel, we use 
$\iota_U$ to denote the \emph{indicator function} associated
with the set $U$ defined $(\forall x\in X)$ by: $\iota_U(x)=0$, 
if $x\in U$;
and $\iota_U(x)=+\infty$, otherwise.
We consider the problem 
\begin{equation}
\label{P:conv:feas}
\text{find $x\in X$ such that $x\in U\cap V$.}
\end{equation}
Note that \cref{P:conv:feas} is a special case of
\cref{eq:prob} by setting
 $(f,g,L,Y)\coloneqq (\iota_U,\iota_V,\Id,X)$.
Let $x_0\in X$ and set $p_0=q_0=0$.
Dykstra's method\footnote{We point out that
in \cite{BC08}, the authors develop a Dykstra-type method 
that extends the original method described in \cref{eq:dykstra}
to solve problems of the form \cref{eq:prob:noL}.} 
applied to solve \cref{P:conv:feas} 
 generates the sequences 
$(x_n)_{\nnn}$, $(y_n)_{\nnn}$, $(p_n)_{\nnn}$,
and $(q_n)_{\nnn}$
defined 
$(\forall \nnn)$ by
\begin{subequations}
\label{eq:dykstra}
\begin{align}
y_n&=P_V(x_n+p_n),
\label{eq:Dykstra:y}\\
p_{n+1}&=x_n+p_n-y_n,
\label{eq:Dykstra:p}\\
x_{n+1}&=P_U(y_n+q_n),
\label{eq:Dykstra:x}\\
q_{n+1}&=y_n+q_n-x_{n+1}.
\label{eq:Dykstra:q}
\end{align}
\end{subequations}
On the other hand, 
the Method of Alternating Projections (MAP)
applied to solve \cref{P:conv:feas}
generates the sequence
  $(x_n)_{\nnn}$ defined $(\forall \nnn)$ by
  \begin{equation}
 x_n=(P_UP_V)^n x_0.
  \end{equation}

 \begin{fact}[\thmtit{convergence of Dykstra's method}]{\rm (see 
 \cite[Theorem~2]{Boyle-Dykstra86}.)}
 \label{fact:Dykstra:conv}
 Let $x_0\in X$ and set $p_0=q_0=0$.
Let the sequences 
$(x_n)_{\nnn}$, $(y_n)_{\nnn}$, $(p_n)_{\nnn}$,
and $(q_n)_{\nnn}$ be defined as in \cref{eq:dykstra}.
Then 
\begin{equation}
x_n\to P_{U\cap V}x_0.
\end{equation}
 \end{fact}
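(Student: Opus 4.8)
The plan is to follow the classical Boyle--Dykstra argument specialized to the two-set iteration \cref{eq:dykstra}. I would begin with a bookkeeping identity: since $p_0=q_0=0$, a straightforward induction using \cref{eq:Dykstra:p} and \cref{eq:Dykstra:q} gives $x_n=x_0-p_n-q_n$ and $y_n=x_0-p_{n+1}-q_n$ for every $n\in\NN$, so that $x_0-x_n=p_n+q_n$ records the total displacement from the starting point. Next I would read off the normal-cone content of the increments: the variational characterization of the projection onto a closed convex set turns $y_n=P_V(x_n+p_n)$ into $\innp{p_{n+1},v-y_n}\le0$ for all $v\in V$, and $x_{n+1}=P_U(y_n+q_n)$ into $\innp{q_{n+1},u-x_{n+1}}\le0$ for all $u\in U$. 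In particular, for any $w\in U\cap V$ both $\innp{p_{n+1},y_n-w}\ge0$ and $\innp{q_{n+1},x_{n+1}-w}\ge0$; these nonnegativities drive the whole estimate.

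The core of the proof is a ``Dykstra inequality''. Fixing $w\in U\cap V$, I would apply firm nonexpansiveness of $P_V$ and $P_U$, namely $\normsq{P_Cz-w}+\normsq{z-P_Cz}\le\normsq{z-w}$ whenever $w\in C$, at $z=x_n+p_n$ with $C=V$ and at $z=y_n+q_n$ with $C=U$. Expanding the squares and substituting $p_{n+1}=x_n+p_n-y_n$ and $q_{n+1}=y_n+q_n-x_{n+1}$, the goal is to regroup all cross terms so that they telescope along the cycle $x_n\to y_n\to x_{n+1}$, producing a recursion of the schematic form
\[
\normsq{x_{n+1}-w}\le\normsq{x_n-w}-\normsq{x_n-y_n}-\normsq{y_n-x_{n+1}}+\big(\text{telescoping terms in }p_n,q_n\big).
\]
Summing over $n$ and invoking the nonnegativities from the first paragraph should yield both boundedness of $(x_n)$ and the square-summability $\sum_{n}\big(\normsq{x_n-y_n}+\normsq{y_n-x_{n+1}}\big)<\infty$; in particular $x_n-y_n\to0$ and $y_n-x_{n+1}\to0$.

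With these estimates I would extract a weak cluster point $\overline x$ of the bounded sequence $(x_n)$. Since $x_n\in U$ and $y_n\in V$, both sets are weakly closed, and $x_n-y_n\to0$, any such $\overline x$ lies in $U\cap V$. To pin down the limit I would verify the variational inequality $\innp{x_0-\overline x,w-\overline x}\le0$ for every $w\in U\cap V$, using $x_0-x_n=p_n+q_n$ together with the accumulated normal-cone inequalities and passing to the limit along the subsequence; this identifies $\overline x$ as the \emph{unique} projection $c\coloneqq P_{U\cap V}x_0$, so $x_n\weakly c$. Finally, because the Dykstra iteration is not \fejer monotone with respect to $U\cap V$, strong convergence is not automatic: I would return to the summation estimates and show directly that $(x_n)$ is Cauchy (equivalently, that $\norm{x_n-c}\to0$ by pairing the already-established weak convergence with convergence of the squared norms read off from the Dykstra inequality).

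The step I expect to be the main obstacle is the middle one: correctly grouping the cross terms so that they telescope and the cumulative inner-product sums stay controlled. Establishing feasibility of the weak limit is routine once $x_n-y_n\to0$, but identifying the limit as the \emph{projection} of $x_0$ rather than merely some feasible point---and then upgrading weak convergence to strong convergence in the absence of \fejer monotonicity---is the delicate heart of the Boyle--Dykstra theorem, and it is exactly here that the carefully maintained increment vectors $p_n$ and $q_n$ are indispensable.
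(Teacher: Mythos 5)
First, a point of comparison: the paper does not prove this statement at all --- it is recorded as a Fact and attributed to \cite[Theorem~2]{Boyle-Dykstra86} --- so there is no in-paper argument to measure your proposal against; what you have written is an outline of the Boyle--Dykstra proof itself. Your skeleton is the right one: the increment identities $x_n=x_0-p_n-q_n$ and $y_n=x_0-p_{n+1}-q_n$, the normal-cone inequalities $\innp{p_{n+1},v-y_n}\le 0$ for $v\in V$ and $\innp{q_{n+1},u-x_{n+1}}\le 0$ for $u\in U$, the projection inequality, a summation, extraction of a weak cluster point in $U\cap V$, identification of that point as $P_{U\cap V}x_0$, and finally the upgrade to norm convergence.

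As a proof, however, the proposal has a genuine gap exactly where you predict one. The recursion you display is only schematic: because Dykstra's iterates are \emph{not} Fej\'er monotone with respect to $U\cap V$, the ``telescoping terms'' cannot be discarded termwise, and the cross terms $\innp{p_n,x_n-w}$ and $\innp{q_n,y_n-w}$ that appear when you expand $\normsq{x_n+p_n-w}$ and $\normsq{y_n+q_n-w}$ are controlled only after a careful regrouping (one compares $\innp{p_n,x_n-w}$ with the sign-definite quantity $\innp{p_n,y_{n-1}-w}\ge 0$ and must absorb the discrepancy $\innp{p_n,x_n-y_{n-1}}$, and similarly for $q_n$). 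Without carrying out this bookkeeping, neither the boundedness of $(x_n)_\nnn$ nor the square-summability of the residuals is actually established. The same applies to the last two steps: verifying $\innp{x_0-\overline{x},w-\overline{x}}\le 0$ for all $w\in U\cap V$ from the accumulated inequalities, and then converting weak convergence plus the summed estimates into $\norm{x_n-P_{U\cap V}x_0}\to 0$, is the real content of \cite[Theorem~2]{Boyle-Dykstra86}; in your write-up these are stated as intentions rather than arguments. So: correct strategy, correctly identified difficulties, but the decisive estimates remain unproved --- which is presumably why the paper simply cites the result rather than reproving it.
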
 
  
   \begin{fact}[\thmtit{convergence of MAP}]{\rm(see \cite{Breg65}.)}
 Let $x_0\in X$ and set $(\forall \nnn)$
$
   x_{n+1}=P_UP_Vx_n.
$
 Then 
\begin{equation}
 x_n\weakly \overline{x}\in U\cap V. 
  \end{equation}
 \end{fact}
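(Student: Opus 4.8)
The plan is to recognize the MAP operator $T\coloneqq P_UP_V$ as a composition of two firmly nonexpansive projections and then to invoke the standard weak-convergence theory for averaged maps. First I would record that the metric projection onto a nonempty closed convex set is firmly nonexpansive, i.e.\ $\tfrac{1}{2}$-averaged; hence $T=P_UP_V$ is $\alpha$-averaged for some $\alpha\in\left]0,1\right[$ (in fact $\alpha=\tfrac{2}{3}$), and in particular nonexpansive. The key structural point is that $\fix T=U\cap V$. The inclusion $U\cap V\subseteq\fix T$ is immediate, while for the reverse I would fix $c\in U\cap V$ and $x\in\fix T$ and exploit firm nonexpansiveness: the chain $\|x-c\|=\|P_UP_Vx-P_UP_Vc\|\le\|P_Vx-P_Vc\|\le\|x-c\|$ must hold with equality throughout, and the firm nonexpansiveness of $P_V$ then forces $x=P_Vx\in V$, and symmetrically $x\in U$, so $x\in U\cap V$.

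Next I would establish Fej\'er monotonicity together with asymptotic regularity. For any $z\in\fix T=U\cap V$, nonexpansiveness gives $\|x_{n+1}-z\|\le\|x_n-z\|$, so $(\|x_n-z\|)_{\nnn}$ decreases to a limit and $(x_n)_{\nnn}$ is bounded. Writing $T=(1-\alpha)\Id+\alpha N$ with $N$ nonexpansive, the averagedness estimate $\|x_{n+1}-z\|^2\le\|x_n-z\|^2-\tfrac{1-\alpha}{\alpha}\|x_n-x_{n+1}\|^2$ telescopes to give $\sum_{\nnn}\|x_n-x_{n+1}\|^2<+\infty$, whence $\|x_n-Tx_n\|=\|x_n-x_{n+1}\|\to 0$.

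Finally I would pass to the weak limit. Boundedness of $(x_n)_{\nnn}$ supplies weak cluster points; for any such $\overline{x}$ with $x_{n_k}\weakly\overline{x}$, the demiclosedness of $\Id-T$ at the origin (Browder's principle, available since $T$ is nonexpansive) together with $(\Id-T)x_{n_k}\to 0$ gives $\overline{x}\in\fix T=U\cap V$. Opial's lemma---that every Fej\'er-monotone sequence all of whose weak cluster points lie in the target set converges weakly to a point of that set---then upgrades this to $x_n\weakly\overline{x}\in U\cap V$, which is the assertion.

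I expect the genuine obstacle to lie in this last, weak-convergence step rather than in the algebra, since in infinite dimensions nonexpansiveness by itself does not force the iterates to converge. The argument therefore rests on two nontrivial tools: the demiclosedness of $\Id-T$ at $0$, which certifies that weak cluster points are fixed points, and Opial's lemma, which rules out more than one cluster point. The averagedness of $T$---exactly what the composition of two firmly nonexpansive projections buys---is the ingredient that delivers the asymptotic regularity $\|x_n-Tx_n\|\to 0$ feeding the demiclosedness principle; without it, mere nonexpansiveness (think of an irrational rotation) would admit no such conclusion.
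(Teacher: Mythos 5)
The paper does not prove this statement: it is recorded as a Fact and attributed to Bregman's 1965 paper \cite{Breg65}, so there is no in-text argument to compare yours against. Your proof is correct and complete, and it is the standard modern route: $P_U$ and $P_V$ are firmly nonexpansive, so $T=P_UP_V$ is $\tfrac{2}{3}$-averaged; the identification $\fix T=U\cap V$ (which is exactly where the standing hypothesis $U\cap V\neq\fady$ enters) is argued correctly via the equality case of firm nonexpansiveness, and once $x=P_Vx$ you get $x=Tx=P_Ux\in U$ for free; the averagedness inequality yields Fej\'er monotonicity plus $\norm{x_n-Tx_n}\to 0$; and demiclosedness of $\Id-T$ at $0$ combined with Opial's argument upgrades boundedness to weak convergence of the whole sequence to a point of $U\cap V$. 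This is essentially the proof one finds in \cite[Chapter~5]{BC2017} for iterations of averaged operators, whereas Bregman's original argument predates the averaged-operator formalism and is more hands-on; your version is the cleaner and more reusable one, and your closing remark correctly isolates the two genuinely nontrivial ingredients (demiclosedness and the Fej\'er/Opial step) that make weak convergence work in infinite dimensions.
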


The next result is a part of the folklore
see, e.g., \cite[comment~on~page~30]{Boyle-Dykstra86}
and also \cite[Section~9.26]{Deutsch01} 
for a general framework that involves $m$ 
closed affine subspaces, where $m\ge 2$. 
We include a simple proof in the case
of two sets for the sake of completeness.

\begin{prop}[\thmtit{Dykstra's method for two closed linear subspaces}]
\label{prop:Dykstra:MAP}
Let $x_0\in X$ and let $U$ and $V$ be closed linear subspaces of 
$X$. Set $p_0=q_0=0$.
Then the Dykstra's method generates the sequences
$(x_n)_{\nnn}$, $(y_n)_{\nnn}$, $(p_n)_{\nnn}$,
and $(q_n)_{\nnn}$
defined 
$(\forall \nnn)$ by
\begin{subequations}
\label{eq:Dykstra:subsp}
\begin{align}
y_n&=P_Vx_n,
\label{eq:Dykstra:subsp:y}\\
p_{n+1}&=P_{V^{\perp}}\sum_{k=0}^{n}x_k,
\label{eq:Dykstra:subsp:p}\\
x_{n+1}&=P_Uy_n,
\label{eq:Dykstra:subsp:x}\\
q_{n+1}&=P_{U^{\perp}}\sum_{k=0}^{n}y_k.
\label{eq:Dykstra:subsp:q}
\end{align}
\end{subequations}
Consequently, Dykstra's method in this case 
is eventually MAP in the sense that
$(\forall \nnn) $ $x_{n+1}=P_UP_Vx_n$.
\end{prop}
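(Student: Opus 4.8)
The plan is to establish the four closed-form identities of \cref{eq:Dykstra:subsp} by a single induction on $n$, after which the reduction to MAP is immediate. The one structural fact I would exploit throughout is that for a closed \emph{linear} subspace the metric projection is a bounded linear idempotent satisfying $P_{V^\perp}=\Id-P_V$ and $P_V P_{V^\perp}=0$, and symmetrically for $U$. Concretely, I would track the auxiliary claim that the Dykstra corrections always lie in the appropriate orthogonal complement, i.e.\ $p_{n+1}\in V^\perp$ and $q_{n+1}\in U^\perp$; this is exactly what renders them invisible to the two projectors, since then $P_V p_{n+1}=0$ and $P_U q_{n+1}=0$.

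First I would dispatch the base case $n=0$. Since $p_0=q_0=0$, \cref{eq:Dykstra:y} gives $y_0=P_V x_0$, then \cref{eq:Dykstra:p} gives $p_1=x_0-P_V x_0=P_{V^\perp}x_0$, while \cref{eq:Dykstra:x} gives $x_1=P_U y_0$ and \cref{eq:Dykstra:q} gives $q_1=y_0-P_U y_0=P_{U^\perp}y_0$, matching the claimed formulas with the one-term sums $\sum_{k=0}^0$. For the inductive step, assuming the four identities at stage $n$, I note $p_{n+1}=P_{V^\perp}\sum_{k=0}^n x_k\in V^\perp$, so linearity of $P_V$ yields $y_{n+1}=P_V(x_{n+1}+p_{n+1})=P_V x_{n+1}$; substituting into \cref{eq:Dykstra:p} and using $P_{V^\perp}=\Id-P_V$ gives $p_{n+2}=P_{V^\perp}x_{n+1}+p_{n+1}=P_{V^\perp}\sum_{k=0}^{n+1}x_k$. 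The identical argument on the $U$-side, using $q_{n+1}\in U^\perp$, produces $x_{n+2}=P_U y_{n+1}$ and $q_{n+2}=P_{U^\perp}\sum_{k=0}^{n+1}y_k$, closing the induction.

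Finally, combining $x_{n+1}=P_U y_n$ with $y_n=P_V x_n$ gives $x_{n+1}=P_U P_V x_n$ for every $\nnn$, which is precisely the MAP recursion. I do not expect a genuine obstacle here: once one records that subspace projections are linear and annihilate the orthogonal complement, the argument is pure bookkeeping. The only point deserving care is verifying that the accumulated corrections $p_n$ and $q_n$ remain in $V^\perp$ and $U^\perp$ at every stage — the telescoping of the partial sums together with this persistent orthogonality are the sole reason the corrections drop out and Dykstra's method collapses to plain alternating projection.
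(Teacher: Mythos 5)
Your proposal is correct and follows essentially the same route as the paper: induction on $n$, using linearity of the subspace projections, the identity $P_{V^\perp}=\Id-P_V$ (and its $U$-analogue), and the fact that $P_V$ annihilates the accumulated correction $p_{n+1}\in V^\perp$ so that the corrections telescope into the partial sums. The only cosmetic difference is that you make the persistent orthogonality $p_{n+1}\in V^\perp$, $q_{n+1}\in U^\perp$ an explicit auxiliary claim, whereas the paper absorbs it silently into the computation $P_V\bigl(x_{n+1}+P_{V^{\perp}}\sum_{k=0}^{n}x_k\bigr)=P_Vx_{n+1}$.
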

\begin{proof}
At $n=0$ the base case is obviously true.
Now suppose that for some $n\ge 0$ we have 
\cref{eq:Dykstra:subsp} holds.
By \cref{eq:Dykstra:y} and \cref{eq:Dykstra:subsp:p}
we have $y_{n+1}=P_V(x_{n+1}+p_{n+1})
=P_V(x_{n+1}+P_{V^{\perp}}\sum_{k=0}^{n}x_k)
=P_Vx_{n+1}$.
Moreover, \cref{eq:Dykstra:p}
and \cref{eq:Dykstra:subsp:p}
imply that $p_{n+2}=x_{n+1}+p_{n+1}-y_{n+1}=x_{n+1}+
P_{V^{\perp}}\sum_{k=0}^{n}x_k-P_Vx_{n+1}=
P_{V^{\perp}}x_{n+1}+P_{V^{\perp}}\sum_{k=0}^{n}x_k
=P_{V^{\perp}}\sum_{k=0}^{n+1}x_k$, as claimed.
The statements for $x_{n+2}$ and $q_{n+2}$ are proved similarly.
\end{proof}

\begin{remark}[\thmtit{Dykstra vs.\ ADMM, DR and CP}]\
\begin{enumerate}
\item
\label{rem:ADMM:DR:Dykstra}
In view of 
\cref{Prop:ADMM:DR}\cref{i:ADMM:to:DR}\&\cref{prop:DR:to:ADMM},
ADMM is equivalent to DR method 
whereas the latter is not equivalent to MAP (see, e.g., \cite{JAT2014}).
Therefore, in view of \cref{prop:Dykstra:MAP},
we conclude that Dykstra's method is \emph{neither equivalent} to 
ADMM \emph{nor} to DR method.
\item 
\label{rem:ADMM:Dykstra}
In the special case when $x_0\in U\cap V$, where 
$U$ and $V$ are nonempty closed convex subsets
such that $U\cap V\neq \fady$, both DR with starting point $x_0$ 
(equivalently, in view of \cref{i:ADMM:to:DR},
 ADMM with starting point\footnote{In passing, we mention that
when $f=\iota_U$ we have $\prox_f=P_U$,
see, e.g., \cite[Example~23.4]{BC2017}.} 
 $(a_0,u_0)=(x_0,x_0-P_Ux_0)$)
 and Dykstra's method with starting point $x_0$
will converge after one iteration to $x_0$.
Therefore, we conclude that
if we start at a \emph{solution}, 
then the two methods generate the same sequences 
(see \cite[Section~5.1.1]{Boyd-PCPE11}).  
\item
\label{rem:ADMM:CP:Dykstra}
Similar to the argument in  \cref{rem:ADMM:DR:Dykstra},
in view of 
\cref{prop:DR:2:CP} and \cref{prop:prop:DR:2:CP}
one can conclude that, in general,
 CP method 
 \emph{neither equivalent} to 
MAP \emph{nor} to Dykstra's method.
\end{enumerate}
\end{remark}

It is well-known that the
 equivalence of Dykstra's method 
 and MAP may fail in general if 
we remove the assumption that both $U$ and 
$V$ are closed linear subspaces,
see, e.g., \cite[Figure~30.1]{BC2017}.
We provide another example below where one set set is
a linear subspace and the other set is a half-space.

\begin{example}[\thmtit{Dykstra's method vs.\ MAP}]
\label{ex:Dykstra vs. MAP}
Suppose that $X=\RR^2$,
that 
$U=\RR\cdot(1,1)$
and that
$V=\RR\times\RR_{-}$. 
Let $(\alpha,\beta)\in \RR_{--}\times\RR_{++}$ 
such that $\beta\le-\alpha$, 
let $x_0=(\alpha,\beta)$ and set $p_0=q_0=(0,0)$.
Let $(x_n)_\nnn$ be the sequence generated by 
Dykstra method \cref{eq:dykstra}.
Then MAP produces the sequence $((P_UP_V)^n x_0)_\nnn$
and one readily verifies that
$(\forall n\ge 1)$ 
$(P_UP_V)^n x_0=(\tfrac{1}{2}\alpha,\tfrac{1}{2}\alpha)$.
Hence, in view of \cref{fact:Dykstra:conv} we have 
\begin{equation}
(P_UP_V)^n x_0\to(\tfrac{1}{2}\alpha,\tfrac{1}{2}\alpha)
\neq (\tfrac{1}{2}(\alpha+\beta),\tfrac{1}{2}(\alpha+\beta))
= P_{U\cap V} x_0\leftarrow x_n.
\end{equation}

\end{example}

\begin{figure}[h!]
\center{
\includegraphics[scale=0.16]{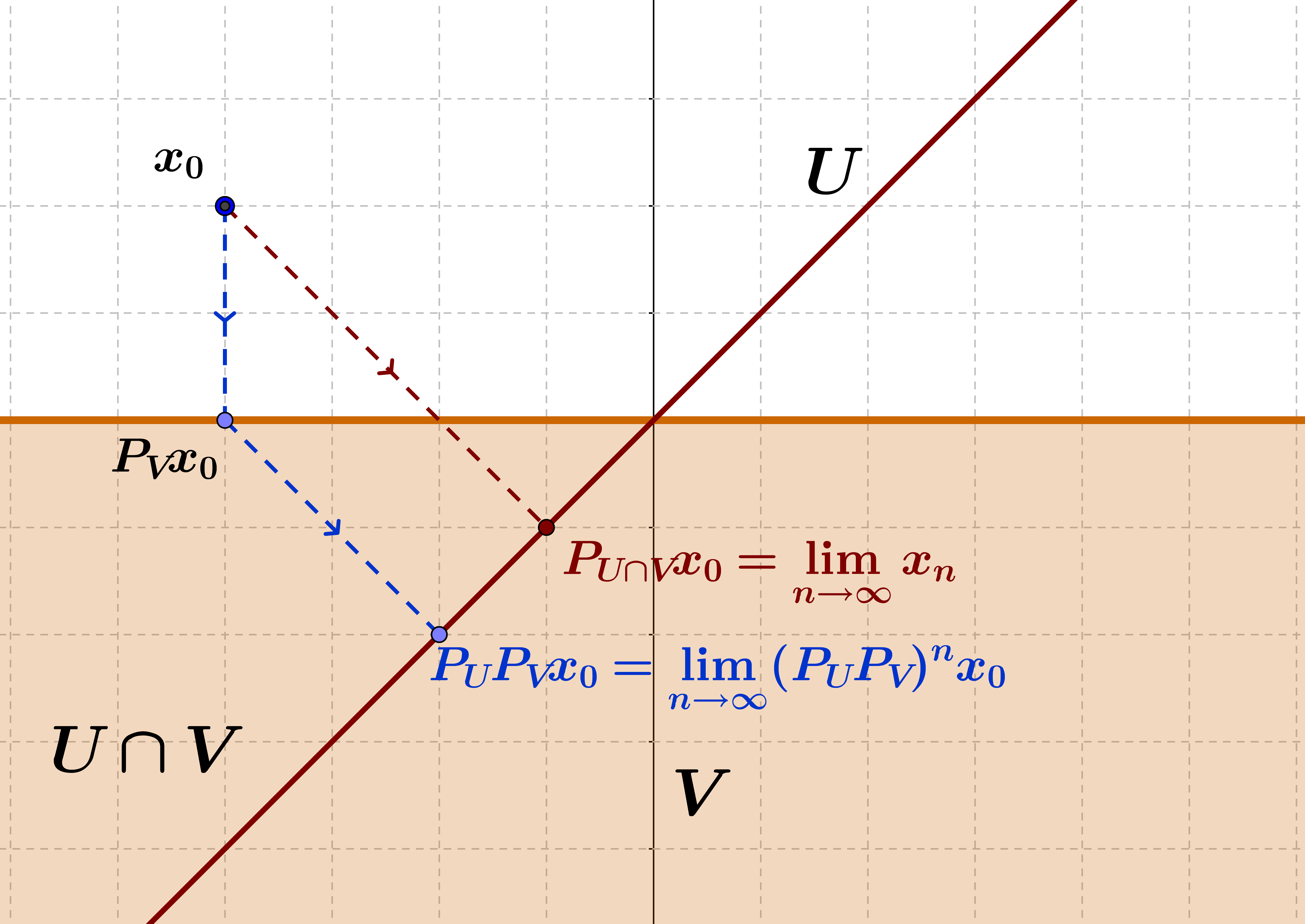}}
\caption{A \texttt{GeoGebra} \cite{geogebra} 
snapshot that illustrates \cref{ex:Dykstra vs. MAP}.
$U$ is the red line and $V$ is the shaded half-space.
Shown also are the starting point $x_0$,
the iterates of the 
MAP sequence $((P_UP_V)^n x_0)_\nnn$
(which is eventually constant
and equals to $\lim_{n\to \infty}(P_UP_V)^n x_0$) 
and the limit $P_{U\cap V}x_0$ 
of the sequence $(x_n)_\nnn$ produced by 
Dykstra's method.
}
\end{figure}
\begin{remark}[\thmtit{forward-backward vs.\ ADMM}]
One can use the forward-backward method
to solve \cref{P:conv:feas} by setting 
$(f,g)=(\tfrac{1}{2}d^2_U,\iota_V)$.
In this case the method reduces to MAP
(see, e.g., \cite[Example~10.12]{CP11}) 
which is not DR (see, e.g., \cite{JAT2014}).
Therefore, in view of 
\cref{Prop:ADMM:DR}\cref{i:ADMM:to:DR}\&\cref{prop:DR:to:ADMM} 
we conclude that 
the forward-backward method
is \emph{not} equivalent to ADMM.
\end{remark}
\section*{Acknowledgments}
WMM was supported by the Pacific Institute of Mathematics 
Postdoctoral Fellowship and 
the DIMACS/Simons 
Collaboration on Bridging
Continuous and Discrete 
Optimization through NSF grant \# CCF-1740425.

\small

\begin{appendices}
	
\crefalias{section}{appsec}
\section{}\label{app:A}



 Let $A\colon X\to X$ be linear. Define
\begin{equation}
q_A\colon X\to \RR\colon x\mapsto\tfrac{1}{2}\innp{x, Ax}.
\end{equation}
 
  Recall that a linear operator $A\colon X\to X$ 
 is \emph{monotone} if 
 $(\forall x\in X)$ $\innp{x, Ax}\ge 0$, and 
 is \emph{strictly monotone} if 
 $(\forall x\in X\smallsetminus \{0\})$ $\innp{x, Ax}> 0$.
 Let $h\colon X\to \RR$ and let $x\in X$.
We say that $h$ is \emph{Fr\'{e}chet 
differentiable at $x$} if there exists a linear operator 
$Dh(x)\colon X\to \RR$, called the 
   \emph{Fr\'{e}chet 
derivative} of $h$  at $x$,
such that
$\lim_{0\neq \norm {y}\to 0}\frac{h(x+y)-h(x)-Dh(x)y}{\norm{y}}=0$;
and $h$ is \emph{Fr\'{e}chet 
differentiable on $X$}  if it is Fr\'{e}chet 
differentiable at every point in $X$.
Also, we say that $h$ is \emph{G\^{a}teaux 
differentiable at $x$} if there exists a linear operator 
$Dh(x)\colon X\to \RR$, called the 
   \emph{G\^{a}teaux
derivative} of $h$  at $x$,
such that $(\forall y\in X)$
$Dh(x)y=\lim_{\alpha\downarrow 0}\frac{h(x+\alpha y)-h(x)}{\alpha}$;
and $h$ is \emph{G\^{a}teaux
differentiable on $X$}  if it is G\^{a}teaux
differentiable at every point in $X$.

 The following lemma is a special case of
 \cite[Proposition~17.36]{BC2017}. 
 \begin{lemma}
 \label{lem:qA:adapt}
 Let $A\colon X\to X$ be linear, strictly monotone, self-adjoint and invertible.
 Then the following hold:
 \begin{enumerate}
 \item
  \label{lem:qA:adapt:i}
 $q_A$ and $q_{A^{-1}}$ are strictly convex, continuous,
 Fr\'{e}chet differentiable 
 and $(\grad q_A,\grad q_{A^{-1}} )=(A,A^{-1})$.
 \item
   \label{lem:qA:adapt:ii}
 $q_{A}^*=q_{A^{-1}}$.
 \end{enumerate}
 \end{lemma}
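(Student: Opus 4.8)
The plan is to reduce everything to two direct computations on the quadratic form $q_A$ and then to observe that $A^{-1}$ inherits the same four hypotheses as $A$. Since $A$ is a (bounded) self-adjoint operator defined on all of $X$, $q_A$ is continuous as a composition of a bounded linear map and a continuous bilinear form. For Fréchet differentiability I would expand
\begin{equation*}
q_A(x+y)-q_A(x)=\thalb\innp{x,Ay}+\thalb\innp{y,Ax}+\thalb\innp{y,Ay}
=\innp{Ax,y}+\thalb\innp{y,Ay},
\end{equation*}
where the last equality uses $\innp{x,Ay}=\innp{A^*x,y}=\innp{Ax,y}=\innp{y,Ax}$ (self-adjointness together with symmetry of the real inner product). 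Because $\abs{\thalb\innp{y,Ay}}\le\thalb\norm{A}\,\norm{y}^2=o(\norm{y})$, the map $y\mapsto\innp{Ax,y}$ is the Fréchet derivative at $x$, so $\grad q_A=A$.

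For strict convexity I would use the exact quadratic identity: for all $x,y\in X$ and $\alpha\in\zeroun$,
\begin{equation*}
\alpha q_A(x)+(1-\alpha)q_A(y)-q_A(\alpha x+(1-\alpha)y)=\thalb\alpha(1-\alpha)\innp{x-y,A(x-y)},
\end{equation*}
which one checks by expanding the left-hand side and using self-adjointness. Strict monotonicity of $A$ forces the right-hand side to be strictly positive whenever $x\neq y$, which is precisely strict convexity. I would then transfer all of this to $q_{A^{-1}}$ by verifying that $A^{-1}$ satisfies the same hypotheses: it is linear and invertible; self-adjoint since $(A^{-1})^*=(A^*)^{-1}=A^{-1}$; and strictly monotone since, writing a nonzero $x$ as $x=Ay$ with $y\neq 0$ (possible as $A$ is bijective), one has $\innp{x,A^{-1}x}=\innp{Ay,y}>0$. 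The two computations above then apply verbatim to $q_{A^{-1}}$, giving $\grad q_{A^{-1}}=A^{-1}$ and finishing \ref{lem:qA:adapt:i}.

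For \ref{lem:qA:adapt:ii} I would compute the conjugate directly. As $q_A$ is convex and differentiable, the supremum in $q_A^*(y)=\sup_{x}\bk{\innp{x,y}-q_A(x)}$ is attained at the unique $x$ with $\grad q_A(x)=Ax=y$, that is $x=A^{-1}y$; substituting yields
\begin{equation*}
q_A^*(y)=\innp{A^{-1}y,y}-\thalb\innp{A^{-1}y,y}=\thalb\innp{y,A^{-1}y}=q_{A^{-1}}(y).
\end{equation*}
The only step that demands genuine care is the claim that the first-order term is the Fréchet derivative, which relies on boundedness of $A$ to control the quadratic remainder; once $\grad q_A=A$ is established, the strict-convexity identity, the inheritance of the hypotheses by $A^{-1}$, and the conjugate formula all follow mechanically, so I expect no substantive obstacle beyond this bookkeeping.
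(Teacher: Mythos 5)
Your proof is correct, but it is genuinely different in character from the paper's: the paper proves both items almost entirely by citation, invoking \cite[Example~17.11 and Proposition~17.36(i)]{BC2017} for item \ref{lem:qA:adapt:i} and, for item \ref{lem:qA:adapt:ii}, the general identity $q_A^*=q_{A^{-1}}+\iota_{\ran A}$ from \cite[Proposition~17.36(iii)]{BC2017} (valid for monotone self-adjoint $A$ that need not be invertible), which collapses to $q_{A^{-1}}$ because $\ran A=X$. You instead exploit invertibility from the outset and compute everything from scratch: the first-order expansion $q_A(x+y)-q_A(x)=\innp{Ax,y}+\thalb\innp{y,Ay}$ with the quadratic remainder controlled by $\norm{A}$, the exact convexity identity whose right-hand side $\thalb\alpha(1-\alpha)\innp{x-y,A(x-y)}$ is forced positive by strict monotonicity, the verification that $A^{-1}$ inherits all four hypotheses (which the paper also records, in the same way), and the direct evaluation of the conjugate at the critical point $x=A^{-1}y$ of the concave function $\innp{\cdot,y}-q_A$. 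All of these computations check out, and the conjugate calculation is legitimate since a critical point of a differentiable concave function is a global maximizer. What your route buys is a self-contained, elementary argument that never needs the range-indicator correction term; what the paper's route buys is brevity and the fact that the cited machinery covers the non-invertible case too. The only point worth flagging is your parenthetical ``bounded'': the lemma's hypotheses say only ``linear,'' so strictly speaking you are using that an everywhere-defined self-adjoint operator on a Hilbert space is automatically bounded (Hellinger--Toeplitz), or else the standing convention that linear operators here are continuous; one clause acknowledging this would close the remaining gap in the $o(\norm{y})$ estimate.
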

 \begin{proof}
 Note that, likewise $A$, $A^{-1}$ is linear, strictly monotone, self-adjoint (since $(A^{-1})^*
 =(A^{*})^{-1}=A^{-1}$) and invertible.
Moreover,  $\ran A=\ran A^{-1}=X$.
 \ref{lem:qA:adapt:i}:
 This follows from \cite[Example~17.11~and~Proposition~17.36(i)]{BC2017}
 applied to $A$ and 
$A^{-1}$ respectively.
 \ref{lem:qA:adapt:ii}: It follows from 
 \cite[Proposition~17.36(iii)]{BC2017},
 \cite[Theorem~4.8.5.4]{StoerBulirsch02}
and the invertibility of $A$ that
 $q_{A}^*=q_{A^{-1}}+\iota_{\ran A}
 =q_{A^{-1}}+\iota_{X}=q_{A^{-1}}$.
 \end{proof}
 
\begin{prop}
\label{prop:JC_form}
Let 
$L\colon Y \to X$ be linear.
Suppose 
that $L^*L$ is invertible. Then the following hold:
\begin{enumerate}
\item
\label{prop:JC_form:0:-1}
$\ker L=\{0\}$ .
\item
\label{prop:JC_form:0}
$L^*L$ is strictly monotone.
\item
\label{prop:JC_form:i}
$\dom(q_{L^*L}+g)^*=X$.
\item
\label{prop:JC_form:i:-1}
$\pt(q_{L^*L}+g)=\grad q_{L^*L}+\pt g=L^*L+\pt g$.
\item
\label{prop:JC_form:i:0}
$(q_{L^*L}+g^*)^*$ is  Fr\'{e}chet 
differentiable on $X$.
\item
\label{prop:JC_form:i:i}
$(L^*L+\pt g^*)^{-1}$ is single-valued and 
$\dom (L^*L+\pt g^*)^{-1}=X$.
\item
\label{prop:JC_form:ii}
$\prox_{g^*\circ L^*}=\Id-L(L^*L+\pt g)^{-1}L^*$.
\item
\label{prop:JC_form:iv}
$\prox_{(g^*\circ L^*)^{*}}=L(L^*L+\pt g)^{-1}L^*$.
\end{enumerate}

\end{prop}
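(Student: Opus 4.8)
The plan is to prove the eight assertions in the order listed, since each leans on its predecessors, and to isolate the one genuinely analytic ingredient—the subdifferential chain rule for a composition with $L^*$—that drives the two prox formulas. First I would dispatch the linear-algebraic parts. For (i): if $Ly=0$ then $L^*Ly=0$, so $y\in\ker(L^*L)=\{0\}$ by invertibility, whence $\ker L=\{0\}$. For (ii): $L^*L$ is self-adjoint since $(L^*L)^*=L^*L$, and for $y\neq 0$ part (i) gives $Ly\neq 0$, so $\innp{y,L^*Ly}=\normsq{Ly}>0$; thus $L^*L$ is strictly monotone. At this point $A:=L^*L$ satisfies every hypothesis of \cref{lem:qA:adapt} (linear, strictly monotone, self-adjoint, invertible), so $q_{L^*L}$ is real-valued, continuous, strictly convex, \Fr\'echet differentiable with $\grad q_{L^*L}=L^*L$, and $q_{L^*L}^*=q_{(L^*L)^{-1}}$; I will use these freely.

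The convex-analytic preliminaries (iii)--(vi) all rest on one quantitative fact: being self-adjoint, monotone and invertible, $L^*L$ is bounded below, $L^*L\succeq c\,\Id$ with $c=1/\norm{(L^*L)^{-1}}>0$, so $q_{L^*L}$ is $c$-strongly convex and grows quadratically. Hence $q_{L^*L}+g$ is supercoercive (a proper lsc convex $g$ is minorized by an affine functional), and since supercoercivity is equivalent to the conjugate being finite and continuous everywhere, $\dom(q_{L^*L}+g)^*$ is the whole space, which is (iii). Part (iv) is the subdifferential sum rule: because $q_{L^*L}$ is real-valued and continuous, $\pt(q_{L^*L}+g)=\pt q_{L^*L}+\pt g=L^*L+\pt g$, the second equality from differentiability. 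Parts (v) and (vi) are the mirror statements for $q_{L^*L}+g^*$: this function is strongly convex and supercoercive, so its conjugate is finite everywhere and, by strong convexity, \Fr\'echet differentiable (v), and its gradient is exactly $\big(\pt(q_{L^*L}+g^*)\big)^{-1}=(L^*L+\pt g^*)^{-1}$, single-valued (strict monotonicity of $L^*L$ forces injectivity) with full domain (supercoercivity forces surjectivity of the subdifferential), which is (vi). The same injectivity/surjectivity reasoning shows that $(L^*L+\pt g)^{-1}$ is likewise single-valued with full domain—the fact I will need below.

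The heart of the matter is (vii). Since $\ker L=\{0\}$ forces $\ran L^*=Y$, the qualification $0\in\sri(\dom g^*-\ran L^*)$ holds trivially, so the chain rule gives $\pt(g^*\circ L^*)=L\circ(\pt g^*)\circ L^*$; note also that $g^*\circ L^*$ is convex, lsc and proper (as $\ran L^*=Y$ meets $\dom g^*$), so $\prox_{g^*\circ L^*}$ is well defined. To compute it at $x$, I would characterize the minimizer $u$ of $g^*(L^*u)+\tfrac12\normsq{u-x}$ by $x-u\in\pt(g^*\circ L^*)(u)=L\,\pt g^*(L^*u)$, i.e.\ $x-u=L\zeta$ for some $\zeta\in\pt g^*(L^*u)$. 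Writing $u=x-L\zeta$ and using the conjugacy equivalence $\zeta\in\pt g^*(L^*u)\iff L^*u\in\pt g(\zeta)$, the identity $L^*u=L^*x-L^*L\zeta$ rearranges to $L^*x\in(L^*L+\pt g)(\zeta)$, so $\zeta=(L^*L+\pt g)^{-1}L^*x$ is well defined by (iii)--(iv) and the single-valuedness noted above. Substituting back yields $\prox_{g^*\circ L^*}(x)=x-L(L^*L+\pt g)^{-1}L^*x$, which is (vii); then (viii) follows immediately from the Moreau decomposition $\prox_{g^*\circ L^*}+\prox_{(g^*\circ L^*)^*}=\Id$, giving $\prox_{(g^*\circ L^*)^*}=L(L^*L+\pt g)^{-1}L^*$.

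The main obstacle is precisely the chain-rule/qualification step together with the conjugate bookkeeping in (vii): one must justify $\pt(g^*\circ L^*)=L\,\pt g^*\,L^*$, correctly pass between $\pt g^*$ and $\pt g$ via conjugacy, and confirm that $(L^*L+\pt g)^{-1}L^*x$ is a genuine single-valued, everywhere-defined point rather than a set—exactly what the strong-convexity and strict-monotonicity arguments behind (ii), (iii), (iv) and (vi) are there to secure.
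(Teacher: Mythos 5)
Your proof is correct and follows the same overall architecture as the paper's (linear algebra first, then the convex-analytic preliminaries, then the prox formula via the chain rule and Moreau decomposition), but several individual steps are carried out by genuinely different means. For \cref{prop:JC_form:i} the paper invokes the Attouch--Br\'ezis theorem to get the exact formula $\dom(q_{L^*L}+g)^*=\dom q_{L^*L}^*+\dom g^*=X$, whereas you derive full domain of the conjugate from supercoercivity of $q_{L^*L}+g$, using $L^*L\succeq c\,\Id$ with $c=1/\norm{(L^*L)^{-1}}$; your route is more elementary and self-contained, while the paper's gives the sum rule \cref{prop:JC_form:i:-1} for free from the same citation. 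For \cref{prop:JC_form:i:0} the paper argues only via strict convexity and obtains G\^ateaux differentiability of $(q_{L^*L}+g)^*$ (note the mismatch with the statement, which asserts Fr\'echet differentiability of $(q_{L^*L}+g^*)^*$); your strong-convexity argument is sharper in that it actually delivers the Fr\'echet (indeed Lipschitz-gradient) conclusion as stated, and you correctly treat both the $g$ and $g^*$ versions, which the paper conflates. For \cref{prop:JC_form:ii} you run the computation forward from the characterization $x-u\in\pt(g^*\circ L^*)(u)$ to solve for $u$, while the paper defines $y\coloneqq x-L(L^*L+\pt g)^{-1}L^*x$ and verifies $x\in(\Id+\pt(g^*\circ L^*))y$; the algebra is identical and both hinge on the same qualification condition for the chain rule.

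One small imprecision to repair: you justify $\ran L^*=Y$ by ``$\ker L=\{0\}$ forces $\ran L^*=Y$,'' which in infinite dimensions only yields $\overline{\ran}\,L^*=(\ker L)^\perp=Y$, i.e.\ dense range. The correct (and equally short) argument, which is the one the paper uses, is $\ran L^*\supseteq\ran(L^*L)=Y$ by the surjectivity of the invertible operator $L^*L$. Since you have invertibility in hand, this is a one-line fix and does not affect the validity of the rest of the argument.
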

\begin{proof}
\ref{prop:JC_form:0:-1}:
Using
\cite[Fact~2.25(vi)]{BC2017}
 and the assumption that 
 $L^*L$ is invertible we have 
$\ker L=\ker L^*L=\{0\}$.
\ref{prop:JC_form:0}:
Using \ref{prop:JC_form:0:-1}
we have  $(\forall x\in X\smallsetminus\{0\})$
$\innp{L^*Lx,x}=\innp{Lx,Lx}=\normsq{Lx}>0$, hence
$L^*L$ is strictly monotone.
\ref{prop:JC_form:i}:
By \ref{prop:JC_form:0} and \cref{lem:qA:adapt}\ref{lem:qA:adapt:i} 
applied with $A$ replaced by $L^*L$
we have $\dom q_{L^*L}=\dom q^*_{L^*L}=X$, 
hence 
\begin{equation}
\label{eq:cq:q*}
\dom q_{L^*L}- \dom g=X- \dom g=X. 
\end{equation}
It follows from \cref{eq:cq:q*},
\cite[Corollary~2.1]{AB86}
 and \cref{lem:qA:adapt}\ref{lem:qA:adapt:ii}\&\ref{lem:qA:adapt:i}
  that 
 $\dom (q_{L^*L}+g)^*=\dom q_{L^*L}^*+\dom g^{*}
=\dom q_{(L^*L)^{-1}}+\dom g^*=X+\dom g^*=X$.
\ref{prop:JC_form:i:-1}:
Combine \cref{eq:cq:q*},
\cite[Corollary~2.1]{AB86}
and \cref{lem:qA:adapt}\ref{lem:qA:adapt:i}.
\ref{prop:JC_form:i:0}:
Since $q_{L^*L}$ is strictly convex,
so is $q_{L^*L}+g$, which in view of
\cite[Proposition~18.9]{BC2017}
 and \ref{prop:JC_form:i}
 implies that
$(q_{L^*L}+g)^*$ is G\^{a}teaux 
differentiable
on $X=\intr \dom (q_{L^*L}+g)^*$. 
\ref{prop:JC_form:i:i}:
Using \ref{prop:JC_form:i:-1}, \cref{fact:func:sd}\ref{eq:sb:inv:conj} 
applied with $f$ replaced by $q_{L^*L}+g$,
\ref{prop:JC_form:i:0}
 and \cite[Proposition~17.31(i)]{BC2017}
we have $(L^*L+\pt g)^{-1}=(\pt(q_{L^*L}+g))^{-1}
=\pt(q_{L^*L}+g)^*
=\{\grad(q_{L^*L}+g)^*\}$ is single-valued with 
$\dom (L^*L+\pt g)^{-1}=X$.
\ref{prop:JC_form:ii}:
Let $x\in X=\dom (L^*L+\pt g)^{-1}$ 
and let $y\in X$ such that $y=x-L(L^*L+\pt g)^{-1}L^*x$.
Then using \ref{prop:JC_form:i:i} we have 
\begin{equation}
\label{eq:awl}
x=y+Lu\quad\text{where}\quad u=(L^*L+\pt g)^{-1}L^*x.
\end{equation}
Consequently, 
 $L^*y+L^*Lu=L^*x \in L^*Lu+\pt g(u)$,
hence
 $L^*y\in \pt g (u)$,
 equivalently, in view of 
 \cref{fact:func:sd}\ref{eq:sb:inv:conj} 
 applied with $f$ replaced by $g$,
 $u\in (\pt g)^{-1}(L^*y)=\pt g^{*} (L^*y)$.
Combining with \cref{eq:awl}
 we learn that 
 \begin{equation}
 \label{eq:interm}
x\in y+L\circ(\pt  g^{*})\circ L^* (y).
\end{equation}
Note that \cite[Fact~2.25(vi)~and~Fact~2.26]{BC2017}
implies that $\ran L^*=\ran L^*L=X$, hence
$0\in \sri(\dom g^*-\ran L^*)$.
Therefore one can apply
\cite[Corollary~16.53(i)]{BC2017}
to re-write \cref{eq:interm} as
$x\in(\Id+\pt (g^*\circ L^*))y$.
Therefore,
 $y=\prox_{g^*\circ L^*}x$
 by \cite[Proposition~16.44]{BC2017}.
\ref{prop:JC_form:iv}:
Apply \cref{fact:func:sd}\ref{fact:func:JA:prox}
with $f$ replaced by $g^*\circ L^*$.
\end{proof}

\crefalias{section}{appsec}
\section{}\label{app:B}

In the following, we make use of the 
useful fact (see \cite[Theorem~3.5.5]{Zali06})
\begin{equation}
\label{eq:eqv:us:uc}
\text{$f$ is uniformly convex $\siff$ $f^*$ is uniformly smooth.}
\end{equation}
Let 
$h\colon X\to \RR$. 
We say that $h$
 is \emph{uniformly smooth} if there exists
 a function $\phi\colon \RR_+\to\left[0,+\infty\right]$
 (called the modulus of uniform smoothness)
 that
 vanishes 
 at $0$ such that
 $(\forall (x,y)\in \dom f\times \dom f)$ 
 $(\forall \alpha \in \left]0,1\right[)$
 we have 
 $f(\alpha x+(1-\alpha) y)+\alpha(1-\alpha)\phi(\norm{x-y})
 \ge \alpha f(x)+(1-\alpha) f(y)$.

\begin{lem}
\label{lem:con:smooth}
The following hold:
\begin{enumerate}
\item
\label{lem:con:smooth:i}
$g^*$ is uniformly smooth. 
\item
\label{lem:con:smooth:ii}
$g^*\circ L^*$ is uniformly smooth.
\item
\label{lem:con:smooth:iii}
$(g^*\circ L^*)^*$ is uniformly convex.
\end{enumerate}
\end{lem}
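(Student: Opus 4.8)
The plan is to establish \cref{lem:con:smooth:i} directly from the uniform convexity / uniform smoothness duality recorded in \cref{eq:eqv:us:uc}, then to transport uniform smoothness through the linear precomposition by $L^*$ to obtain \cref{lem:con:smooth:ii}, and finally to read off \cref{lem:con:smooth:iii} from \cref{lem:con:smooth:ii} by one further passage to the conjugate. For \cref{lem:con:smooth:i}, I would apply \cref{eq:eqv:us:uc} with $f$ replaced by $g$: since $g$ is uniformly convex, its conjugate $g^*$ is uniformly smooth.

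For \cref{lem:con:smooth:ii}, let $\phi_0$ be a modulus of uniform smoothness for $g^*$. Given $u,v\in\dom(g^*\circ L^*)$ and $\alpha\in\zeroun$, I would invoke the defining inequality of uniform smoothness of $g^*$ at the points $L^*u,L^*v\in\dom g^*$; using the linearity identity $L^*(\alpha u+(1-\alpha)v)=\alpha L^*u+(1-\alpha)L^*v$, this yields
\[(g^*\circ L^*)(\alpha u+(1-\alpha)v)+\alpha(1-\alpha)\phi_0(\norm{L^*(u-v)})\ge\alpha(g^*\circ L^*)(u)+(1-\alpha)(g^*\circ L^*)(v).\]

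The one genuine obstacle is that the error term here depends on $u,v$ through $\norm{L^*(u-v)}$ rather than through $\norm{u-v}$, so it is not yet a modulus in the required form. I would resolve this using the operator-norm estimate $\norm{L^*(u-v)}\le\norm{L}\,\norm{u-v}$ by setting $\phi\colon\RR_+\to\RPX\colon t\mapsto\sup_{0\le s\le\norm{L}t}\phi_0(s)$. This $\phi$ takes values in $\RPX$, vanishes at $0$ because $\phi_0$ does, and satisfies $\phi(\norm{u-v})\ge\phi_0(\norm{L^*(u-v)})$; substituting it into the displayed inequality exhibits $\phi$ as a modulus of uniform smoothness for $g^*\circ L^*$. (If one is uneasy about monotonicity, this supremum construction makes the argument go through for an arbitrary modulus $\phi_0$.)

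Finally, for \cref{lem:con:smooth:iii} I would apply \cref{eq:eqv:us:uc} once more, now with $f$ replaced by $(g^*\circ L^*)^*$, so that this function is uniformly convex precisely when its conjugate $(g^*\circ L^*)^{**}$ is uniformly smooth. It then remains to identify $(g^*\circ L^*)^{**}$ with $g^*\circ L^*$: since $g$ is proper, lower semicontinuous and convex, so is $g^*$, and because $L^*$ is continuous and surjective (by invertibility of $L^*L$; see the proof of \cref{prop:JC_form}\ref{prop:JC_form:ii}), the composition $g^*\circ L^*$ is again proper, lower semicontinuous and convex, whence $(g^*\circ L^*)^{**}=g^*\circ L^*$. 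This last function is uniformly smooth by \cref{lem:con:smooth:ii}, and the equivalence therefore yields \cref{lem:con:smooth:iii}.
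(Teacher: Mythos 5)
Your proof is correct and takes essentially the same route as the paper's: \ref{lem:con:smooth:i} and \ref{lem:con:smooth:iii} via the duality \cref{eq:eqv:us:uc}, and \ref{lem:con:smooth:ii} by writing the defining inequality for $g^*$ at the points $L^*u,L^*v$ and then dominating $\phi_0(\norm{L^*(u-v)})$ by a modulus evaluated at $\norm{u-v}$. The only (harmless) differences are that the paper gets a nondecreasing modulus for free as the Az\'e--Penot conjugate $\phi^{\#}$ of the modulus of convexity of $g$, whereas you monotonize an arbitrary modulus by a running supremum, and that you make explicit the biconjugation $(g^*\circ L^*)^{**}=g^*\circ L^*$ (using surjectivity of $L^*$ for properness) that the paper leaves implicit in \ref{lem:con:smooth:iii}.
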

\begin{proof}
\ref{lem:con:smooth:i}:
Apply \cref{eq:eqv:us:uc} with $f$
replaced by $g$.
\ref{lem:con:smooth:ii}:
Suppose that $\phi$ is the modulus of
 uniform convexity of $g$.
Using \cite[Proposition~2.6(ii)]{Aze-P95} 
we learn that
$g^*$ is uniformly smooth with modulus 
 $\phi^\#\colon \left[0,+\infty\right[\to\left[ 0,+\infty\right]
 \colon 
 s\mapsto \sup\menge{rs-\phi (r)}{r\ge 0}$.
 Moreover  \cite[Comment~on~page~708]{Aze-P95}  
 implies that  $\phi^\#$
 is a nondecreasing function
that vanishes only at $0$.
Set $\psi\coloneqq \phi^\#(\norm{L^*}\cdot)$
and note that \cite[Fact~2.25(ii)]{BC2017}
 and 
\cref{prop:JC_form}\ref{prop:JC_form:0:-1}
imply that $\norm{L^*}=\norm{L}\neq 0$.
Consequently, likewise $\phi^\#$,
 $\psi$  is a nondecreasing function
that vanishes only at $0$.
Let $(x,y)\in \dom f\times \dom f$ 
 and let $ \alpha \in \left]0,1\right[$.
 We claim that 
\begin{equation}
(g^*\circ L^*)(\alpha x+(1-\alpha) y)+\alpha(1-\alpha)\psi(\norm{x-y})
 \ge \alpha (g^*\circ L^*)(x)+(1-\alpha) (g^*\circ L^*)(y).
 \end{equation}
 Indeed, we have
$(g^*\circ L^*)(\alpha x+(1-\alpha) y)+\alpha(1-\alpha)\psi(\norm{x-y})
=g^*(\alpha L^* x+(1-\alpha) L^*y)+\alpha(1-\alpha)\phi(\norm{L^*}\norm{x-y})
\ge g^*(\alpha L^* x+(1-\alpha) L^*y)+\alpha(1-\alpha)\phi(\norm{L^*x-L^*y})
\ge  \alpha g^*(L^* x)+(1-\alpha)g^*( L^*y)
=\alpha (g^*\circ L^*)(x)+(1-\alpha) (g^*\circ L^*)(y)$;
equivalently $g^*\circ L^*$ is uniformly smooth
with modulus $\psi$.
\ref{lem:con:smooth:iii}:
Combine \ref{lem:con:smooth:ii}
and \cref{eq:eqv:us:uc} applied with 
$f$ replaced by $g^*\circ L^*$.
\end{proof}
	
	\crefalias{section}{appsec}
	\section{}\label{app:C}
	We start by recalling the following well-known fact.
\begin{fact}
\label{fact:func:sd}
Let $f\colon X\to \left]-\infty,+\infty\right]$ 
be convex, lower semicontinuous and proper.
Then the following hold:
\begin{enumerate}
\item
\label{eq:sb:inv:conj} 
$(\pt f)^{-1}=\pt f^*.$
\item
\label{fact:func:JA:prox}
$\prox_f+\prox_{f^*}=\Id$.
\end{enumerate}
\end{fact}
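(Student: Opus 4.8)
The plan is to obtain both identities from the Fenchel--Young inequality together with the Fenchel--Moreau biconjugate theorem, which guarantees $f^{**}=f$ because $f$ is convex, lower semicontinuous and proper.

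For \ref{eq:sb:inv:conj} I would rely on the equality case of Fenchel--Young: directly from the definitions of the conjugate and of the subdifferential, for all $x,x^*\in X$ one has $f(x)+f^*(x^*)\ge\scal{x}{x^*}$, and $x^*\in\pt f(x)$ holds if and only if this inequality is an equality. Applying the very same characterization to $f^*$ and invoking $f^{**}=f$, the membership $x\in\pt f^*(x^*)$ is likewise equivalent to $f^*(x^*)+f(x)=\scal{x}{x^*}$. Thus both memberships reduce to one symmetric equality
\begin{equation*}
x^*\in\pt f(x)\;\siff\; f(x)+f^*(x^*)=\scal{x}{x^*}\;\siff\; x\in\pt f^*(x^*),
\end{equation*}
and, since $x\in(\pt f)^{-1}(x^*)$ means precisely $x^*\in\pt f(x)$, this chain is exactly the claim $(\pt f)^{-1}=\pt f^*$.

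For \ref{fact:func:JA:prox} I would fix $x\in X$ and put $p\coloneqq\prox_f x$. The minimization defining $\prox_f$ has a strongly convex, lower semicontinuous objective $y\mapsto f(y)+\tfrac12\norm{x-y}^2$, so $p$ exists and is unique, and its first-order optimality condition reads $0\in\pt f(p)+(p-x)$, i.e.\ $x-p\in\pt f(p)$. Feeding this into part \ref{eq:sb:inv:conj} yields $p\in\pt f^*(x-p)$, which I would rewrite as $x-(x-p)\in\pt f^*(x-p)$; by the same optimality characterization applied to $f^*$ this says exactly $x-p=\prox_{f^*}x$. Adding, $\prox_f x+\prox_{f^*}x=p+(x-p)=x$, and as $x$ was arbitrary we conclude $\prox_f+\prox_{f^*}=\Id$.

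The deepest ingredient---and the only genuine obstacle---is the biconjugate identity $f^{**}=f$ used to make the equivalence in \ref{eq:sb:inv:conj} symmetric; this is where lower semicontinuity and convexity enter, via a Hahn--Banach separation argument. Everything else is routine: the equality case of Fenchel--Young is immediate from the definitions, while the existence, uniqueness, and optimality condition of the proximal point follow from strong convexity and the sum rule for subdifferentials applied to $f$ plus a finite-valued continuous quadratic.
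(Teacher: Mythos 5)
Your proof is correct: part \ref{eq:sb:inv:conj} via the equality case of Fenchel--Young combined with $f^{**}=f$, and part \ref{fact:func:JA:prox} via the optimality condition $x-\prox_f x\in\pt f(\prox_f x)$ fed back into \ref{eq:sb:inv:conj}, is exactly the standard argument. The paper itself does not write out a proof but merely cites the literature (Rockafellar/Gossez for \ref{eq:sb:inv:conj} and \cite[Theorem~14.3(iii)]{BC2017} for Moreau's decomposition \ref{fact:func:JA:prox}), and the proofs in those references are the ones you have reproduced, so there is no substantive difference in approach.
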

\begin{proof}
\ref{eq:sb:inv:conj}: See, e.g., \cite[Remark~on~page~216]{Rock1970}
or \cite[Th\'{e}or\`{e}me~3.1]{Gossez}. 

\ref{fact:func:JA:prox}: See, e.g., \cite[Theorem~14.3(iii)]{BC2017}.
\end{proof}

	\begin{lem}
\label{lem:DR:gA}
The Douglas--Rachford method 
given in \cref{eq:DR:all}
applied to
the ordered pair $(f,g)$ 
with a starting point $x_0\in X$
to solve
\cref{eq:prob:noL}
can be rewritten as:			
\begin{subequations}
\label{eq:R:DR:rho}
\begin{align}
y_n&= \prox_{f} x_{n}
\label{eq:R:DR:rho:i}
\\
x_{n+1}&=y_{n}-\prox_{g^*}(2y_n-x_{n}).
\label{eq:R:DR:rho:ii}
\end{align}
\end{subequations}
\end{lem}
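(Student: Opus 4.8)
The plan is to start from the definition of the Douglas--Rachford operator and convert the $\prox_g$ term into a $\prox_{g^*}$ term by means of the Moreau decomposition recorded in \cref{fact:func:sd}\ref{fact:func:JA:prox}. Concretely, \cref{eq:DR:b} already gives $y_n=\prox_f x_n$, which is exactly \cref{eq:R:DR:rho:i}, so that first identity requires no argument; it also supplies the useful rewriting $2\prox_f x_n-x_n=2y_n-x_n$, which I will use to simplify the argument appearing inside $\prox_g$.

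For the second identity, I would substitute the closed form $\TDR=\Id-\prox_f+\prox_g(2\prox_f-\Id)$ from \cref{def:TDR} into the update \cref{eq:DR:a}. This yields
\begin{equation*}
x_{n+1}=x_n-\prox_f x_n+\prox_g(2\prox_f x_n-x_n)=x_n-y_n+\prox_g(2y_n-x_n),
\end{equation*}
where the second equality uses $y_n=\prox_f x_n$ and $2\prox_f x_n-x_n=2y_n-x_n$. The key step is then to invoke the Moreau decomposition $\prox_g+\prox_{g^*}=\Id$, i.e.\ \cref{fact:func:sd}\ref{fact:func:JA:prox} applied with $f$ replaced by $g$, evaluated at the point $2y_n-x_n$. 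This gives $\prox_g(2y_n-x_n)=(2y_n-x_n)-\prox_{g^*}(2y_n-x_n)$.

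Plugging this in and collecting the affine terms $x_n-y_n+(2y_n-x_n)=y_n$, I obtain $x_{n+1}=y_n-\prox_{g^*}(2y_n-x_n)$, which is precisely \cref{eq:R:DR:rho:ii}, completing the rewriting. I do not expect any genuine obstacle here: the entire content is the algebraic cancellation enabled by the Moreau identity. The only points worth stating carefully are that the standing hypothesis makes $g$ proper, lower semicontinuous and convex (so \cref{fact:func:sd}\ref{fact:func:JA:prox} applies), and that the decomposition is being used at the specific point $2y_n-x_n$ rather than as an operator identity in disguise.
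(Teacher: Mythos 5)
Your proposal is correct and coincides with the paper's own argument: both substitute the closed form of $\TDR$ from \cref{def:TDR}, identify $y_n=\prox_f x_n$, and then apply the Moreau decomposition \cref{fact:func:sd}\ref{fact:func:JA:prox} (with $f$ replaced by $g$) at the point $2y_n-x_n$ to produce the $\prox_{g^*}$ term, after which the affine terms cancel to give \cref{eq:R:DR:rho:ii}. No gaps.
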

\begin{proof}
Using \cref{def:TDR}, 
and 
\cref{fact:func:sd}\ref{fact:func:JA:prox}
applied with $f$
replaced by $g$
we have
\begin{align}
x_{n+1}&=
x_n-\prox_f x_n+\prox_g(2\prox_f x_n-x_n)
=x_n-y_n+\prox_g(2y_n-x_n)
\nonumber
\\
&=x_n-y_n+2y_n-x_n-\prox_{g^*}(2y_n-x_n)=y_{n}-\prox_{g^*}(2y_n-x_{n}),
\end{align}
and the conclusion follows.
\end{proof}
	
\crefalias{section}{appsec}
\section{}\label{app:D}	
\begin{prop}
\label{prop:prod:p}
Let $(x,y,z)\in X\times Y\times Z$.
Then the following hold:
\begin{enumerate}
\item
\label{prop:prod:p:i:i}
$B^* y=(A^* y,C^* y)$.
\item
\label{prop:prod:p:ii}
$\dom  \widetilde{f}=\dom f\times \{0\}$.

\item
\label{prop:prod:p:iii}
$(\forall (x,z)\in\dom  \widetilde{f})$ we have 
$z=0$ and 
$B(x,z)=Ax$.
\item
\label{prop:prod:p:iv}
$B(\dom  \widetilde{f})=A(\dom f)$.

\item
\label{prop:prod:p:vii}
$0\in \sri (\dom g-B(\dom  \widetilde{f}))$.


\item
\label{prop:prod:p:v}
$\argmin(\widetilde{f}+g\circ B)=\argmin (f+g\circ A)\times \{0\} \neq \fady$.

\item
\label{prop:prod:p:viii}
$\prox_{\widetilde{f}}(x,z)=(\prox_f x,0)$.
\item
\label{prop:prod:p:ix}
$\prox_{(g\circ B)^*}=B^*\prox_{g^*}B$.
\end{enumerate}
\end{prop}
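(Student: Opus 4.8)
The plan is to dispatch the structural items \ref{prop:prod:p:i:i}--\ref{prop:prod:p:viii} by direct computation from the definitions of $B$ and $\widetilde{f}$, and to reserve the real work for the proximal identity \ref{prop:prod:p:ix}, which is the only step that genuinely uses the hypothesis $BB^*=\Id$.

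For \ref{prop:prod:p:i:i} I would expand $\innp{B^*y,(x,z)}=\innp{y,Ax+Cz}=\innp{A^*y,x}+\innp{C^*y,z}$ and read off $B^*y=(A^*y,C^*y)$ from the product inner product. Item \ref{prop:prod:p:ii} is immediate from $\widetilde{f}(x,z)=f(x)+\iota_{\{0\}}(z)$, which is finite exactly when $x\in\dom f$ and $z=0$; then \ref{prop:prod:p:iii} follows because any $(x,z)\in\dom\widetilde{f}$ has $z=0$, whence $B(x,z)=Ax+Cz=Ax$, and \ref{prop:prod:p:iv} is the resulting set equality $B(\dom\widetilde{f})=\{Ax\mid x\in\dom f\}=A(\dom f)$. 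Feeding \ref{prop:prod:p:iv} into the standing hypothesis \cref{eq:cq:A:i} gives $0\in\sri(\dom g-A(\dom f))=\sri(\dom g-B(\dom\widetilde{f}))$, which is \ref{prop:prod:p:vii}. For \ref{prop:prod:p:v} I would note that $\widetilde{f}(x,z)+g(B(x,z))=+\infty$ unless $z=0$, in which case it equals $f(x)+g(Ax)$; hence minimizing over $X\times Z$ reduces to minimizing $f+g\circ A$ over $X$ with $z$ forced to $0$, so $\argmin(\widetilde{f}+g\circ B)=\argmin(f+g\circ A)\times\{0\}$, with nonemptiness supplied by \cref{eq:cq:A:i}. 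Finally \ref{prop:prod:p:viii} uses separability: since $\widetilde{f}$ is the separable sum $f\oplus\iota_{\{0\}}$, its proximal operator splits coordinatewise, so $\prox_{\widetilde{f}}(x,z)=(\prox_f x,\prox_{\iota_{\{0\}}}z)=(\prox_f x,0)$ because $\prox_{\iota_{\{0\}}}=P_{\{0\}}\equiv 0$.

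The substantive step is \ref{prop:prod:p:ix}, and this is where I expect the main obstacle. First I would record that $g\circ B$ is proper, lower semicontinuous and convex: $B$ is surjective (from $BB^*=\Id$ one has $y=B(B^*y)$ for every $y$, so $\ran B=Y\supseteq\dom g\neq\emptyset$), so its proximal operator and that of its conjugate are well defined. The plan is then to invoke the proximity formula for a composition with a semi-orthogonal operator: when $BB^*=\Id$ one has $\prox_{g\circ B}=\Id+B^*(\prox_g-\Id)B$. Combining this with Moreau's decomposition $\prox_h+\prox_{h^*}=\Id$ (\cref{fact:func:sd}\ref{fact:func:JA:prox}), applied first to $h=g\circ B$ and then to $h=g$, yields
\begin{equation*}
\prox_{(g\circ B)^*}=\Id-\prox_{g\circ B}=-B^*(\prox_g-\Id)B=B^*(\Id-\prox_g)B=B^*\prox_{g^*}B,
\end{equation*}
which is the claim. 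The delicate point is justifying the composition formula in this Hilbert-space setting rather than quoting it uncritically; I would cite it from \cite{BC2017}, and, if no directly applicable statement is available, reprove it by observing that $B^*$ is an isometry and $B^*B$ is the orthogonal projection onto $\ran B^*=(\ker B)^\perp$ (since $(B^*B)^2=B^*(BB^*)B=B^*B$ is idempotent and self-adjoint), and then checking the variational characterization of $\prox_{g\circ B}$ against the proposed formula.
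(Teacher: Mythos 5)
Your proof is correct, and for items \ref{prop:prod:p:i:i}--\ref{prop:prod:p:viii} it essentially matches the paper (direct computation from the definitions, the standing assumption \cref{eq:cq:A:i}, and separability of $\widetilde f$ for the proximal operator). Two points of genuine divergence are worth noting. For \ref{prop:prod:p:v} you argue at the level of function values --- $\widetilde f(x,z)+g(B(x,z))$ is $+\infty$ off $Z=\{0\}$ and equals $f(x)+g(Ax)$ there --- which is more elementary than the paper's route through $\zer(\pt\widetilde f+B^*\circ\pt g\circ B)$ and the product rule for subdifferentials; your version avoids the constraint-qualification bookkeeping entirely and is arguably cleaner, at the cost of not exhibiting the inclusion form that the paper reuses elsewhere. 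For the substantive item \ref{prop:prod:p:ix}, the paper does \emph{not} go through $\prox_{g\circ B}$ at all: it applies its own \cref{prop:JC_form}\ref{prop:JC_form:iv} with $(g,L)$ replaced by $(g^*,B^*)$, so that $\prox_{(g\circ B)^*}=B^*(BB^*+\pt g^*)^{-1}B$ collapses to $B^*\prox_{g^*}B$ via $BB^*=\Id$. Your route --- the semi-orthogonal composition formula $\prox_{g\circ B}=\Id+B^*(\prox_g-\Id)B$ (which is \cite[Proposition~24.14]{BC2017}, so your worry about availability is unfounded) followed by two applications of Moreau decomposition --- is equally valid and has the advantage of quoting a standard textbook result rather than the paper's bespoke Appendix~A machinery; the paper's choice buys internal self-containment, since \cref{prop:JC_form} is already needed for the ADMM sections. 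Your preliminary observations that $B$ is surjective (so $g\circ B$ is proper) and that $B^*B$ is the orthogonal projection onto $\ran B^*$ are correct and would suffice for the fallback reproof you sketch.
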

\begin{proof}
\ref{prop:prod:p:i:i}:
Clear.
 \ref{prop:prod:p:ii}:
It follows from \cref{eq:def:ftild} that
$\dom\widetilde{f}=\dom f\times \dom\iota_{\{0\}}=\dom f\times\{0\}$.
\ref{prop:prod:p:iii}:
The claim that $z=0$ follows from \ref{prop:prod:p:ii}.
Now combine with \cref{eq:assmp:B}.
\ref{prop:prod:p:iv}:
Combine \ref{prop:prod:p:ii} and \ref{prop:prod:p:iii}.
\ref{prop:prod:p:vii}:
Combine
\ref{prop:prod:p:iv}
and 
\cref{eq:cq:A:i}.
\ref{prop:prod:p:v}:
It follows from \ref{prop:prod:p:iii},
\ref{prop:prod:p:vii}
 and \cref{eq:connec:2:inc}
 applied with $A$ replaced by $B$
 that 
$\argmin(\widetilde{f}+g\circ B)
=\zer (\pt \widetilde{f}+B^* \circ \pt g\circ B)
=\zer (\pt f \times N_{\{0\}}+((A^* \circ \pt g\circ A)\times(C^* \circ \pt g\circ A)) )
=(\zer(\pt f+A^* \circ \pt g\circ A)\times(\zer(N_{\{0\}}+C^* \circ \pt g\circ A)))
$.
Therefore, $(x,z)\in \argmin(\widetilde{f}+g\circ B)$
$\siff$ [$ z=0$ and $x\in \zer(\pt f+A^* \circ \pt g\circ A)$]
$\siff $ $(x,z)\in\argmin(f+g\circ A) \times \{0\}$.
No combine with \cref{eq:connec:2:inc}.
\ref{prop:prod:p:vii}:
Combine \cref{eq:def:ftild}
 and \cite[Proposition~23.18]{BC2017}.
\ref{prop:prod:p:ix}:
Apply
\cref{prop:JC_form}\ref{prop:JC_form:iv}
with $(g,L)$ replaced by $(g^*, B^*)$
 and use \cref{eq:assmp:B}.
\end{proof}

\end{appendices}	
%
%

%
%
\end{document}